\newtheorem{theorem}{Theorem}
\newtheorem{lemma}[theorem]{Lemma}
\newtheorem{proposition}[theorem]{Proposition}
\newtheorem{corollary}[theorem]{Corollary}
\theoremstyle{definition}
\newtheorem{definition}[theorem]{Definition}
\newtheorem{remark}[theorem]{Remark}
\newcommand{\Section}[1]{\section{#1}\setcounter{theorem}{0}}
\newcommand{\KN}{\mathbin{\bigcirc\mspace{-15mu}\wedge\mspace{3mu}}}
\newcommand{\gabc}{{g_{abc}}}
\newcommand{\ii}{{\mathbf{i}}}
\newcommand{\jj}{{\mathbf{j}}}
\newcommand{\kk}{{\mathbf{k}}}
\newcommand{\N}{{\mathbb N}}
\newcommand{\Z}{{\mathbb Z}}
\newcommand{\HH}{{\mathbb H}}
\newcommand{\eps}{{\varepsilon}}
\renewcommand{\theta}{{\vartheta}}
\renewcommand{\phi}{{\varphi}}
\newcommand{\R}{{\mathbb R}}
\newcommand{\C}{{\mathbb C}}
\newcommand{\U}{{\operatorname{U}}}
\newcommand{\SO}{{\operatorname{SO}}}
\newcommand{\SU}{{\operatorname{SU}}}
\newcommand{\su}{{\operatorname{\mathfrak{su}}}}
\newcommand{\Spin}{{\operatorname{Spin}}}
\newcommand{\Hom}{{\operatorname{Hom}}}
\newcommand{\spann}{{\operatorname{span}}}
\newcommand{\spec}{{\operatorname{spec}}}
\newcommand{\Ric}{{\operatorname{Ric}}}
\newcommand{\Riem}{R}
\newcommand{\scal}{{\operatorname{scal}}}
\newcommand{\Id}{{\operatorname{Id}}}
\newcommand{\Tr}{{\operatorname{Tr}}}
\newcommand{\dimm}{{\operatorname{dim}}}
\newcommand{\dvol}{{\operatorname{\textit{d\/}\text{vol}}}}
\newcommand{\vol}{{\operatorname{vol}}}
\newcommand{\inv}{^{-1}}
\newcommand{\restr}[1]{\lower0.4ex\hbox{$|$}\lower0.7ex \hbox{$\scriptstyle{#1}$}}
\begin{document}

\title[On the Dirac spectrum of homogeneous 3-spheres]{On the Dirac spectrum of homogeneous 3-spheres}
\author{Jordi Kling}
\address{}
\email{\href{mailto:jordikling@posteo.de}{jordikling@posteo.de}}
\author{Dorothee Schueth}
\address{Institut f\"ur Mathematik, Humboldt-Universit\"at zu
Berlin, D-10099 Berlin, Germany}
\email{\href{mailto:schueth@math.hu-berlin.de}{schueth@math.hu-berlin.de}}

\keywords{Dirac operator, SU(2), SO(3), homogeneous 3-sphere, fundamental tone, spectral rigidity}
\subjclass[2010]{58J50, 58J53, 53C30}
 
\begin{abstract}
We show that any two left-invariant metrics on $S^3\cong\SU(2)$ which are isospectral for the associated classical Dirac operator~$D$ must be isometric. In the case of left-invariant metrics of positive scalar curvature, we compute and use the smallest eigenvalue of~$D^2$.
We show analogous results for left-invariant metrics on $\SO(3)=S^3/\{\pm1\}$ for each of its two spin structures.
\end{abstract}

\maketitle
\thispagestyle{empty}

\Section{Introduction}

\noindent
On any closed Riemannian spin manifold, the associated classical Dirac operator~$D$ is an elliptic, self-adjoint operator with a discrete spectrum consisting of real eigenvalues of finite multiplicity. Just as in the case of the Laplace operator or any other geometric operator,
it is an interesting question in inverse spectral geometry to which extent the geometry of the underlying Riemannian manifold
is determined by the spectrum of~$D$.

There do exist many examples of Dirac isospectral, nonisometric manifolds. For example, any two flat tori which are nonisometric, but isospectral
for the Laplace operator on functions (it is well known that there exist such pairs in any dimension ${}\ge4$) also are Dirac isospectral for the associated trivial spin structures. This follows from Friedrich's computation of the Dirac spectra of flat tori; see, e.g., \cite{Gin}, p.~29. 
Examples of nonisometric, Dirac isospectral lens spaces in dimensions $4k+3\ge7$ were systematically constructed by Boldt and Lauret in~\cite{BL}. See section~6.1 of~\cite{Gin} for more examples. 

Among the results in the converse direction -- i.e., determination of geometric properties by the Dirac spectrum -- is the computation of the first few heat invariants of $D^2$ by Dlubek and Friedrich in~\cite{DF}, which can also be recovered by Gilkey's later, more general formulas in~\cite{Gil} for the heat invariants of general Laplace type operators in vector bundles over closed Riemannian manifolds.
Another example of a converse result is a theorem by Boldt which says that any two Dirac isospectral $3$-dimensional lens spaces with fundamental group of prime order and metric induced by the standard metric on the sphere must be isometric, see~\cite{Bo}.

Just as in the case of the Laplace operator on functions, explicit computation of Dirac spectra is possible only for quite restricted classes
of Riemannian manifolds. These include, for example, certain symmetric spaces, certain Bieberbach manifolds and Heisenberg manifolds, spherical spaceforms, Berger spheres and certain discretes quotients of these; see~\cite{Gin}, p.~38 for more details and references.

In the present article, we focus on homogeneous metrics on the compact $3$-dimensional Lie group $S^3\cong\SU(2)$ and its quotient
$\SO(3)=S^3/\{\pm1\}$. Each homogeneous metric on these is left-invariant and is isometric to, resp.~induced by, a metric of the form $\gabc$, where $a,b,c>0$
are the inverses of the lengths of the standard basis elements of $T_1 S^3=\spann_\R\{\ii,\jj,\kk\}\subset\HH$; see Section~\ref{sec:prelims} for more details.
Here, explicit computation of the full Dirac spectra is, just as for the Laplace spectrum, possible only in the case of Berger metrics, that is, in the case of metrics $\gabc$ where at least two of the numbers $a,b,c$ coincide. B\"ar~\cite{Ba} computed
the Dirac spectra of Berger metrics $S^3$ and on lens spaces $S^3/\Z_N$. Even though computation of the full Dirac spectrum is not possible for general $a,b,c$, one can aim, at least, at computing the eigenvalue of smallest absolute value; its square is the so-called fundamental tone of~$D^2$.

Lauret~\cite{La} succeeded in doing this for the Laplace operator on functions, both on $(S^3,\gabc)$ and $(\SO(3),\gabc)$ (see also~\cite{BLP} for a generalization to certain other homogeneous spaces). Moreover, Lauret proved in~\cite{La}
that -- within the set of homogeneous metrics -- the smallest Laplace eigenvalue of $\gabc$ on $S^3$ or $\SO(3)$, together with the volume and the scalar curvature, determines the underlying metric $\gabc$ up to isometry.

Our motivation for the present article was to achieve similar results for the Dirac operator. After choice of an orientation,
there exist precisely one spin structure over $(S^3,\gabc)$ and precisely two spin structures over $(\SO(3),\gabc)$, and we
consider the associated Dirac operators. Note that, since we are in odd dimension, changing the orientation replaces these
Dirac operators by their negatives, which has no effect on statements about the absolute values of eigenvalues.

Appallingly, it turns out that there is no hope of giving a general
formula (in terms of the metric) for the smallest absolute value of Dirac eigenvalues of $(S^3,\gabc)$ if one does not impose any additional geometric restrictions.
In fact, B\"ar~\cite{Ba} showed for $S^3$ with its canonical orientation: If $(a,b,c)=(\frac 1T,1,1)$ then each $-\frac T2+2n$ ($n\in\N$)
is contained in the Dirac spectrum of $(S^3,\gabc)$. Now, if one considers the corresponding $1$-parameter families of metrics depending on~$T$ and lets $T$ tend to infinity,
more and more of these particular eigenvalues successively pass across the zero point. So, any formula for the smallest absolute value
of eigenvalues would have to contain infinitely many case distinctions.

There is a quite canonical condition which prevents such a behavior, namely, positive scalar curvature. In fact, the well-known Lichnerowicz Theorem (see, e.g., Theorem~8.8 in~\cite{LM}) implies that any eigenvalue $\lambda$ of the Dirac operator satisfies $\lambda^2\ge\frac14\min(\scal)$, where $\scal$ is the scalar curvature. In our case, since the metrics $\gabc$ are homogeneous, the scalar curvature
is a constant (depending on $a,b,c$). The assumption of positive scalar curvature thus implies that $\frac 12\sqrt{\scal}$ is a common positive
lower
bound for the absolute values $|\lambda|$. 
Moreover, the condition $\scal>0$ is canonical in the sense that the spectrum of the Dirac operator determines \emph{whether} it is satisfied. In fact, the heat invariants of the square of the Dirac operator determine the volume and the total scalar curvature and, thus, in our homogeneous case,
the value of the scalar curvature.

Imposing the condition $\scal>0$, we will indeed be able to obtain explicit formulas for the Dirac eigenvalue of smallest
absolute value. Our first main theorem is:
\begin{theorem}
\label{thm:first}
Let $D$ denote the Dirac operator associated
with $(S^3,\gabc)$ (endowed with either orientation), and let $D^{\alpha_0}$ and $D^{\alpha_1}$ denote the Dirac operators associated
with the trivial and the nontrivial spin structure, respectively, on the quotient manifold $(\SO(3),\gabc)$.
Let
$$C\coloneqq\frac12\left(\frac{ab}c+\frac{bc}a+\frac{ca}b\right)\text{\ \ and \ \ }\mu\coloneqq a+b+c-C.
$$
If $\scal>0$ then
\begin{align*}
\min\{|\lambda|\mid \lambda\in\spec(D)\}&=\mu>0,\\
\min\{|\lambda|\mid \lambda\in\spec(D^{\alpha_0})\}&=C\ge\mu,\\
\min\{|\lambda|\mid \lambda\in\spec(D^{\alpha_1})\}&=\mu.
\end{align*}
The smallest eigenvalue of the squared operator $D^2$ has multiplicity $4$ if $a=b=c$ and multiplicity~$2$ otherwise.
The smallest eigenvalue of $(D^{\alpha_0})^2$ always has multiplicity~$2$; the same holds for $(D^{\alpha_1})^2$.
\end{theorem}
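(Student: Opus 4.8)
\emph{Proof proposal.} The plan is to diagonalize $D$ block by block via the Peter--Weyl theorem. Since $S^3\cong\SU(2)$ is parallelizable and $\gabc$ is left-invariant, the spinor bundle is trivial, $\Sigma\cong S^3\times\C^2$, and
$$L^2(\Sigma)\;\cong\;L^2(\SU(2))\otimes\C^2\;\cong\;\bigoplus_{n\ge0}V_n\otimes\bigl(V_n^*\otimes\C^2\bigr)$$
as a module over $\SU(2)_L\times\SU(2)_R$, where $V_n$ is the irreducible representation of dimension $n+1$ and the left-invariant vector fields act on the $V_n^*$-factor only. As the metric is left-invariant, $D$ commutes with $\SU(2)_L$, hence restricts to an operator $D_n$ on each $V_n^*\otimes\C^2$, the factor $V_n$ merely accounting for a multiplicity $n+1$. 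Using the standard formula for the spin connection of a left-invariant metric (all of its Clifford terms except the scalar $\tfrac14(\lambda_1+\lambda_2+\lambda_3)=C$ drop out for the preferred orthonormal frame of a $3$-dimensional unimodular Lie group, $\lambda_i$ being the structure constants) together with the identification $\Spin(3)\cong\SU(2)$ of Clifford multiplication by the frame $(a\ii,b\jj,c\kk)$ with the spin-$\tfrac12$ representation, one obtains
$$D_n\;=\;C\cdot\Id\;+\;\sqrt{-1}\,\bigl(a\,\sigma_1\otimes A_1+b\,\sigma_2\otimes A_2+c\,\sigma_3\otimes A_3\bigr),$$
where the $\sigma_i$ are the Pauli matrices and $A_1,A_2,A_3$ are the skew-Hermitian generators of the spin-$\tfrac n2$ representation on $V_n^*$, normalized by $[A_1,A_2]=2A_3$ cyclically and $A_1^2+A_2^2+A_3^2=-n(n+2)$. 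For $\SO(3)=S^3/\{\pm1\}$ everything is the same, with $D^{\alpha_0}$ and $D^{\alpha_1}$ obtained by keeping only the blocks with $n$ even, resp.\ $n$ odd (the $\{\pm1\}$-invariant, resp.\ $\{\pm1\}$-anti-invariant, spinors on $S^3$).

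The blocks $n=0,1$ are then handled directly. For $n=0$ the $A_i$ vanish, so $D_0=C\cdot\Id$, contributing the eigenvalue $C$ with multiplicity $2$. For $n=1$ one has $A_i=-\sqrt{-1}\sigma_i$, so $D_1=C\cdot\Id+a\,\sigma_1\otimes\sigma_1+b\,\sigma_2\otimes\sigma_2+c\,\sigma_3\otimes\sigma_3$ on $\C^2\otimes\C^2$; this $4\times4$ matrix is diagonal in the standard triplet/singlet basis, with eigenvalues $2(a+b)-\mu$, $2(b+c)-\mu$, $2(c+a)-\mu$ on the triplet and $-\mu$ on the singlet, each of multiplicity $2$ after accounting for $\dim V_1$. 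Hence $-\mu\in\spec(D)$ with multiplicity $2$; the elementary bound $C>\max\{a,b,c\}$ shows that the three triplet eigenvalues all have absolute value $>\mu$, while $C=\mu$ only when $a=b=c$ and $C=-\mu$ never. If $a=b=c=:t$ then $\mu=C=\tfrac32t$, and a short Casimir computation on the blocks (using $V_n\otimes\C^2\cong V_{n+1}\oplus V_{n-1}$, on which $\sqrt{-1}\sum_i\sigma_i\otimes A_i$ acts by $n$ on $V_{n+1}$ and by $-(n+2)$ on $V_{n-1}$) recovers B\"ar's spectrum~\cite{Ba} and shows that $\tfrac94 t^2$ is the smallest eigenvalue of $D^2$, occurring on the blocks $n=0$ and $n=1$ with total multiplicity $4$.

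The technical heart is the estimate for $n\ge2$: assuming $\scal>0$ one must show that $D_n$ has no eigenvalue of absolute value $\le\mu$ (and, for the even blocks, none of absolute value $\le C$), with strict inequality to pin down the multiplicities. Writing $D_n=C+Q$ with $Q=\sqrt{-1}\sum_i p_i\,\sigma_i\otimes A_i$, $(p_1,p_2,p_3)=(a,b,c)$, a computation gives
$$D_n^2\;=\;C^2\;-\;\Id\otimes\left(\sum_i p_i^2A_i^2\right)\;+\;2\sqrt{-1}\sum_i\bigl(Cp_i-p_jp_k\bigr)\,\sigma_i\otimes A_i,$$
and one checks that $Cp_i-p_jp_k=p_i^2\cos\gamma_i$, where $\gamma_1,\gamma_2,\gamma_3$ are the angles of the Euclidean triangle with side lengths $bc,ca,ab$; that triangle is non-degenerate precisely when $\scal>0$ (indeed $\scal=32\,(\mathrm{Area})^2/(abc)^2$), and its existence also forces $\mu>0$. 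The plan is to exploit the quadratic recursion $Q^2=-\Id\otimes(\sum_i p_i^2A_i^2)-2Q'$, where $Q'$ is the analogous operator for the coefficient vector $(bc,ca,ab)$, together with the fact that this iteration is periodic of period $2$ up to scaling — since $\bigl((ca)(ab),(ab)(bc),(bc)(ca)\bigr)=abc\,(a,b,c)$, one has $Q''=abc\,Q$ — and to combine this with an explicit analysis, in a weight basis of $V_n$, of the quadratic form $\psi\mapsto\langle D_n^2\psi,\psi\rangle$, bounding it below by $\mu^2$ (resp.\ by $C^2$ on the even blocks) using the triangle inequality for $bc,ca,ab$. I expect this last step to be the main obstacle: the $D_n$ are $2(n+1)$-dimensional matrices depending on three parameters, there is no Casimir shortcut once $a,b,c$ are distinct, and the bound must be uniform in $n\ge2$ and in $(a,b,c)$ subject only to $\scal>0$ — it is the Dirac analogue of Lauret's analysis~\cite{La} of the bottom of the Laplace spectrum of $\gabc$.

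Granting this estimate, the theorem follows by assembling the blocks. For $(S^3,\gabc)$ the minimum of $|\lambda|$ over $\spec(D)$ is $\mu$, attained only on the block $n=1$, so the smallest eigenvalue of $D^2$ is $\mu^2$ with multiplicity $2$ — plus a further $2$ from the block $n=0$ exactly when $a=b=c$, giving total multiplicity $4$ in that case and $2$ otherwise. For $(\SO(3),\gabc)$ with the nontrivial spin structure only the odd blocks survive, so the same analysis gives minimum $\mu$ with multiplicity $2$ (the block $n=0$ is now absent). For the trivial spin structure only the even blocks survive; the block $n=0$ contributes $C\ge\mu$ and the even blocks with $n\ge2$ contribute strictly larger absolute values, so the minimum is $C$ with multiplicity $2$.
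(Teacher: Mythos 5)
Your setup coincides with the paper's: the Frobenius/Peter--Weyl block decomposition of $L^2$-spinors, the identification of $D^{\alpha_0}$ and $D^{\alpha_1}$ with the even and odd blocks, the explicit diagonalization of the blocks $n=0$ and $n=1$ (your overall sign $+C$ versus the paper's $-C$ is just the orientation convention and is harmless for statements about $|\lambda|$), and the final multiplicity bookkeeping are all in order. The gap is precisely the step you yourself flag as the main obstacle: the claim that for $\scal>0$ every eigenvalue of $D_n$ has absolute value strictly greater than $\mu$ for all $n\ge2$, and strictly greater than $C$ for all even $n\ge2$. What you offer for this is a plan, not a proof: the identities $Q^2=-\Id\otimes\bigl(\sum_i p_i^2A_i^2\bigr)-2Q'$, $Q''=abc\,Q$, the relation $Cp_i-p_jp_k=p_i^2\cos\gamma_i$ for the triangle with side lengths $bc,ca,ab$, and $\scal=32(\mathrm{Area})^2/(abc)^2$ are all correct (and attractive), but no lower bound on $\langle D_n^2\psi,\psi\rangle$ is actually derived from them, and without that bound none of the three displayed equalities of the theorem is established -- only the block-by-block data for $n\le1$.

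Be aware also that the intended shape of your estimate (a weight-basis quadratic-form bound, uniform in $n\ge2$ and in $(a,b,c)$ with $\scal>0$) cannot work naively for the small blocks. The paper's concluding remark in Section~\ref{sec:smallest} shows that near the degenerate point $(a,b,c)=(1,1,\tfrac12)$ the natural Gershgorin-type lower bounds satisfy $G(2,0),G(4,0)<C^2$ and $G(3,0)<\mu^2$ even for nearby metrics with $\scal>0$, and that for some metrics with $\scal>0$ the operator $D_3^2$ genuinely has eigenvalues below $C^2$; so the cases $n=2,3,4$ require exact spectral information, which the paper obtains by computing the characteristic polynomials of the blocks and locating their roots (Remark~\ref{rem:smalln}), while the range $n\ge5$ is handled by the Gershgorin Circle Theorem applied to the pentadiagonal matrices $\mathcal{A}_n^2,\mathcal{B}_n^2$ (Corollary~\ref{cor:minmin}), combined with the monotonicity $G(n+2,k+1)>G(n,k)$ of Lemma~\ref{lem:triangle} and the base cases of Proposition~\ref{prop:base}; the hypothesis $\scal>0$ enters exactly there, through Lemma~\ref{lem:bounds}(iv),(v). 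Until you supply an argument of comparable strength (your period-two recursion and the triangle picture could be seeds of one, but are not yet one), the proposal does not prove the theorem.
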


In the proof we will make use of B\"ar's general approach for describing the Dirac operator on a homogeneous manifold (see~\cite{Ba}); at the
same time, in the case of $S^3$, we generalize his considerations from the special case of Berger $3$-spheres to our arbitrary
homogeneous metrics $\gabc$. Just as in~\cite{La}, the Gershgorin Circle Theorem will play a technical key role in our arguments.
 
We will apply Theorem~\ref{thm:first} to show that if $\scal>0$ then for each of the above
three Dirac operators, the corresponding eigenvalue spectrum determines the isometry class
of $\gabc$ within the class of homogeneous metrics on the underlying manifold. More precisely, we will
show that the smallest absolute value of eigenvalues, together with the volume and the scalar curvature, determines $a,b,c$ up to permutation.
While that smallest absolute value is not available in the case of \emph{nonpositive} scalar curvature, it turns out that spectral
determination of the isometry class within the class of homogeneous metrics does hold nevertheless if $\scal\le0$. Our second main theorem is, thus, regardless of the sign of the scalar curvature:

\begin{theorem}\
\label{thm:isometry}
\begin{itemize}
\item[(i)]
Within the class of homogeneous metrics on~$S^3$, the metric $\gabc$ is determined by the spectrum of~$D$ up to isometry.
\item[(ii)]
Within the class of homogeneous metrics on $\SO(3)$, the metric $\gabc$ is determined by the spectrum of~$D^{\alpha_0}$ up to isometry.
The same holds for the spectrum of~$D^{\alpha_1}$.
\end{itemize}
\end{theorem}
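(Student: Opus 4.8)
The plan is to translate the assertion into one about elementary symmetric functions. Write $e_1=a+b+c$, $e_2=ab+bc+ca$, $e_3=abc$; then $g_{abc}$ and $g_{a'b'c'}$ are isometric exactly when $(e_1,e_2,e_3)=(e_1',e_2',e_3')$, so it suffices to recover the unordered triple $(e_1,e_2,e_3)$ from the spectrum. As recalled in the introduction, the heat invariants of $D^2$ (resp.\ of $(D^{\alpha_0})^2$ and $(D^{\alpha_1})^2$) determine the volume and the total scalar curvature; since the metric is homogeneous, this yields both $e_3$ (the volume is a fixed multiple of $1/e_3$) and the constant $\scal$, and in particular it tells us whether $\scal>0$ or $\scal\le0$, so the two cases may be treated separately. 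A short computation with Milnor's curvature formulas for left-invariant metrics on $\SU(2)$ expresses the relevant quantities through the $e_i$:
\begin{equation*}
C=\frac{e_2^2-2e_1e_3}{2e_3},\qquad \mu=e_1-C=2e_1-\frac{e_2^2}{2e_3},\qquad \scal=8e_1^2-16e_2-8C^2=16e_1\mu-8\mu^2-16e_2.
\end{equation*}
The task thus reduces to extracting \emph{one} further spectral datum and then solving for $(e_1,e_2)$ with $e_3$ and $\scal$ already known.

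Consider first the case $\scal>0$. Here Theorem~\ref{thm:first} supplies the missing datum: the smallest absolute value in $\spec(D)$ equals $\mu$, the smallest in $\spec(D^{\alpha_1})$ equals $\mu$, and the smallest in $\spec(D^{\alpha_0})$ equals $C$. Take $D$ (the other two are handled the same way, with $\mu$ replaced by $C$ where appropriate). From $\scal=16e_1\mu-8\mu^2-16e_2$ one gets $e_2$ as an affine function of $e_1$; substituting into $e_2^2=2e_3(2e_1-\mu)$ --- which is $\mu=2e_1-e_2^2/(2e_3)$ rearranged --- produces a quadratic equation in $e_1$ with discriminant $\mu\,\scal\,e_3+16e_3^2>0$, hence two real roots, both positive. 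The point is to show that exactly one of them comes from a genuine metric. I would do this by imposing all necessary conditions on a candidate: beyond $e_1,e_2>0$, the polynomial $t^3-e_1t^2+e_2t-e_3$ must have three positive real roots, and by Theorem~\ref{thm:first} one must have $C\ge\mu$, i.e.\ $e_1\ge2\mu$. Proving that these constraints discard the spurious root in every case is the technical core of this part.

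In the remaining case $\scal\le0$ there is, by design, no closed formula for the smallest eigenvalue, so the extra datum must be read off from the finer structure of the spectrum set up (following B\"ar) in the proof of Theorem~\ref{thm:first}. There, Peter--Weyl decomposes the $L^2$ sections of the spinor bundle over $(S^3,g_{abc})$ into summands $V_n\otimes(V_n^*\otimes\C^2)$ on which $D$ acts as $\Id_{V_n}\otimes A_n$ for an explicit matrix $A_n=A_n(a,b,c)$, so that $\spec(D)=\bigsqcup_{n\ge0}\spec(A_n)$ with each eigenvalue of $A_n$ carrying the weight $\dim V_n=n+1$. One recovers the individual block spectra from the total spectrum --- the multiplicities are multiples of $n+1$, and for large $n$ the eigenvalues of $A_n$ are confined (by the Gershgorin circle theorem, as in the proof of Theorem~\ref{thm:first}) to known regions, so the bookkeeping closes --- and then reads off from a single low block a polynomial invariant of $(a,b,c)$, for instance $\Tr(A_1^2)$ or the characteristic polynomial of the $4\times4$ matrix $A_1$, that is symmetric in $a,b,c$ and not a combination of $e_3$ and $\scal$. (Alternatively one might use the $a_4$ heat coefficient of $D^2$, which in dimension $3$ is a further combination of $\scal^2$ and $|\Ric|^2$.) Combined with $e_3$ and $\scal$ this gives a polynomial system for $(e_1,e_2)$, finished as above by discarding the roots incompatible with positivity and realizability. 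The operators $D^{\alpha_0},D^{\alpha_1}$ on $\SO(3)$ are handled analogously, using the relevant sub-collections of blocks.

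I expect the real difficulty throughout to be this last elimination step rather than the assembly of the invariants: once the invariants are in hand one faces a polynomial equation --- quadratic when $\scal>0$, of higher degree otherwise --- with several real roots, and one must show that the requirements ``$a,b,c$ real and positive'', sharpened by the inequalities of Theorem~\ref{thm:first} when $\scal>0$, always leave precisely one. In the case $\scal\le0$ there is the additional, more structural obligation of verifying that the chosen block quantity is genuinely recoverable from the bare list of eigenvalues and is spectrally independent enough to close the system.
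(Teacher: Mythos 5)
Your overall skeleton matches the paper's: heat invariants give the volume (hence $abc$) and the constant $\scal$, hence the sign dichotomy; for $\scal>0$ the fundamental tone from Theorem~\ref{thm:first} ($\mu$ for $D$ and $D^{\alpha_1}$, $C$ for $D^{\alpha_0}$) is the extra datum, and one solves a small symmetric-function system. But in that case you stop exactly at the decisive step: your elimination order (solve the $\scal$-relation for $e_2$ as an affine function of $e_1$, substitute into the $\mu$-relation) yields a quadratic in $e_1$ with \emph{two} positive roots, and you only assert that positivity/realizability constraints ``discard the spurious root'', calling this the technical core without proving it. The fix is to eliminate in the other order, as the paper does: the same two relations give $4\mu\,e_2^2/e_3-16e_2-\scal=0$, a quadratic in $e_2$ whose roots have product $-\scal\,e_3/(4\mu)<0$, so exactly one root is positive, and then $e_1$ is recovered from $\mu=2e_1-e_2^2/(2e_3)$. (Equivalently: in your order, the two candidate pairs correspond to the two roots of this $e_2$-quadratic, so requiring $e_2>0$ already removes one.) For $D^{\alpha_0}$ it is cleaner to work with the symmetric functions of $a^2,b^2,c^2$: the volume gives $a^2b^2c^2$, then $C=\tfrac12(a^2b^2+b^2c^2+c^2a^2)/(abc)$ gives the second one, and $\scal=8(a^2+b^2+c^2-C^2)$ gives the first, with no root selection at all.

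The genuine gap is the case $\scal\le0$. Your primary proposal --- reconstructing the individual block spectra $\spec(D_n)$ from the bare eigenvalue list and reading off, say, $\Tr(D_1^2)$ or the characteristic polynomial of the $n=1$ block --- is unsubstantiated and is precisely what breaks down without positive scalar curvature: the Gershgorin/triangle-induction localization of the block spectra in Section~\ref{sec:smallest} rests on Lemma~\ref{lem:bounds}(v), which is \emph{equivalent} to $\scal>0$, and the Berger examples in the introduction (eigenvalues $-\tfrac T2+2n$ marching across $0$ as $T\to\infty$) show that eigenvalues from arbitrarily high blocks can sit anywhere, while ``multiplicity divisible by $n+1$'' does not identify $n$; so the claimed ``bookkeeping'' does not close. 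Your parenthetical fallback --- the $t^2$ heat coefficient, which in dimension $3$ reduces to a combination of $\scal^2$ and $\|\Ric\|^2$ --- is in fact the workable route (and the paper's), but you neither compute it for $\gabc$ nor supply the uniqueness argument, which is the nontrivial part: the paper forms $\tilde a_2=8\|\Ric\|^2+7\|R\|^2$, shows $(\tilde a_2-101\scal^2)/576=-\scal\,\sigma_1+4\sigma_2$ in the symmetric functions $\sigma_i$ of $a^2,b^2,c^2$, and then the resulting quadratic for $\sigma_2$ has at most one positive solution \emph{because} $\scal\le0$ (the paper notes the same monotonicity argument would fail for $\scal>0$). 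Your closing ``finished as above by discarding roots incompatible with positivity'' therefore rests on a step you did not carry out even in the easier case, and in this case the sign condition is what actually does the work; the nonpositive-curvature half needs to be rebuilt along these lines.
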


For proving this in the case of nonpositive curvature we will
use the third heat invariant $a_2$ from~\cite{DF} for the square of the Dirac operator, together with the volume and $\scal$ (which correspond to $a_0$ and $a_1$; see Section~\ref{sec:invar} for more details). Notably, that argument will actually \emph{need} the condition $\scal\le0$. Again, the choice of orientation plays no role here because the square of the Dirac operator is invariant
under a change of orientation. Note that the analog of Theorem~\ref{thm:isometry} for the Laplace operator was proved not only in~\cite{La} by Lauret, but independently also in~\cite{LSS} by Lin, Schmidt, and Sutton, who used the corresponding heat invariants $a_0$, $a_1$, $a_2$, $a_3$.

In the case of $S^3$ and $\scal>0$, the results of Theorem~\ref{thm:first} and Theorem~\ref{thm:isometry}
are contained in the first author's unpublished master thesis~\cite{Kl}.

This paper is organized as follows: In Section~\ref{sec:prelims} we will set up the required preliminaries, give formulas for the Dirac
operators on $(S^3,\gabc)$ and $(\SO(3),\gabc)$, and describe their restrictions to certain finite-dimensional invariant subspaces
using Frobenius reciprocity, following the approach in B\"ar's article~\cite{Ba}.
In Section~\ref{sec:smallest} we will prove Theorem~\ref{thm:first}.
Finally, Section~\ref{sec:invar} is devoted to the proof of Theorem~\ref{thm:isometry}.

\Section{Preliminaries}
\label{sec:prelims}

\noindent
Let $\HH=\spann\{1,\ii,\jj,\kk\}$ denote the skew-field of quaternions. We consider the Lie group
$$S^3=\{x\in\HH\mid x\bar x = 1\}\subset\HH,
$$
which is isomorphic to $\SU(2)$
via $z-\jj w\mapsto\left(\smallmatrix z&-\bar w\\ w&\hphantom{-}\bar z\endsmallmatrix\right)$ for $z,w\in\C$. Left-invariant Riemannian metrics on $S^3$ correspond to Euclidean inner products on $T_1 S^3=\spann\{\ii,\jj,\kk\}$. Let
$$
a,b,c>0.
$$
\begin{definition}
\label{def:gabc}
We let
\begin{equation*}
\begin{gathered}
\gabc\coloneqq \text{ the left-invariant metric on }S^3\text{ with orthonormal basis }\{X_1,X_2,X_3\},\\
\text{where }X_1\coloneqq a\ii, \ X_2\coloneqq b\jj, \ X_3\coloneqq c\kk.
\end{gathered}
\end{equation*}
We denote the induced left-invariant metric on the quotient Lie group $\SO(3)=S^3/\{\pm1\}$ by $\gabc$ again.
\end{definition}

\begin{remark}
\label{rem:permut}
It is well known that every left-invariant metric on~$S^3$ or $\SO(3)$ is isometric to a metric of the form $\gabc$ (see, e.g., \cite{Mi}).
Moreover, for any permutation~$\sigma$ of $\{a,b,c\}$, $g_{\sigma(a)\sigma(b)\sigma(c)}$ on~$S^3$
is isometric to $\gabc$ by an orientation-preserving isometry which also descends to $\SO(3)=S^3/\{\pm1\}$.
For example, with $p\coloneqq \frac{\jj+\kk}{\sqrt2}$, the map $S^3\ni x\mapsto pxp\inv\in S^3$ is an orientation-preserving automorphism sending the left-invariant vector fields $\ii,\jj,\kk$ to $-\ii,\kk,\jj$, respectively, thus giving an isometry between $(S^3,g_{abc})$ and $(S^3,g_{acb})$,
which obviously also descends to $\SO(3)$.
\end{remark}

Consider the Riemannian manifold $(S^3,\gabc)$, endowed with the standard orientation, with respect to which $\{\ii,\jj,\kk\}$ is positively oriented.
Up to isomorphism, there exists exactly one spin structure over this oriented Riemannian manifold, namely:
$$F:\Spin(S^3)\coloneqq S^3\times\Spin(3)\ni(x,u)\mapsto\{x\,a\ii, x\,b\jj, x\,c\kk\}\theta(u)\in\SO(S^3,\gabc)_x\subset\SO(S^3,\gabc),
$$
where $\theta:\Spin(3)\to\SO(3)$ is the usual two-fold covering and $\SO(3)$ acts canonically from the right on the bundle $\SO(S^3,\gabc)$ of positively oriented orthonormal bases of $(S^3,\gabc)$.

In \cite{Ba}, C.~B\"ar established formulas for spin structures, spinor bundles, and the associated Dirac operators over an oriented homogeneous space $M=G/H$, endowed with a $G$-invariant Riemannian metric. We specialize these to the case $M=G/H=S^3/\{1\}\cong S^3$, endowed with the above orientation. Then the above spin structure $S^3\times\Spin(3)$ corresponds to B\"ar's $G\times_{\alpha'}\Spin(n)$ with $n=3$ and
$$\alpha':H=\{1\}\ni 1\mapsto 1\in\Spin(3).
$$
Consequently (see Lemma~4 of \cite{Ba} in the special case $\alpha'=1$), the associated spinor bundle is isomorphic to
$$\Sigma S^3=S^3\times\Sigma_3,
$$
where
$$\Sigma_3\cong\C^2
$$
denotes the space of complex $3$-spinors. In particular, smooth sections of $\Sigma S^3$ can be viewed
as smooth maps $\phi:S^3\to\Sigma_3$. Applying Theorem~1 of~\cite{Ba} in our special case, we easily obtain the following
formula for the associated Dirac operator~$D$:

\begin{remark}
\begin{equation}
\label{eq:dirac}
(D\phi)(x)=\sum_{\ell=1}^3 e_\ell\cdot X_\ell\restr{x}(\phi)+\frac12\left(\frac{ab}c+\frac{bc}a+\frac{ca}b\right)e_1\cdot e_2\cdot
e_3\cdot\phi(x)
\end{equation}
for all $\phi\in C^\infty(S^3,\Sigma_3)$ and $x\in S^3$, where $\mathbf{\cdot}$ denotes Clifford multiplication, $\{X_1,X_2,X_3\}$ is the above orthonormal basis $\{a\ii,b\jj,c\kk\}$ regarded as left-invariant vector fields,
and $\{e_1,e_2,e_3\}$ denotes the standard basis of~$\R^3$. (Note that we have $[X_1,X_2]=\frac{2ab}c X_3$ and the analogous formulas arising from cyclic permutation, which implies, in B\"ar's notation, $\beta_i=0$ for $i=1,2,3$, and $\alpha_{123}=
\frac12\left(\frac{ab}c+\frac{bc}a+\frac{ca}b\right)$.)
\end{remark}

Let $\pi_n$ denote the irreducible representation of $S^3\cong\SU(2)$ on the $(n+1)$-dimensional complex vector space
$V_n\coloneqq \spann\{P_k\mid k=0,...,n\}$ of the polynomials $P_k(z,w)=z^{n-k}w^k$, given by $(\pi_n(x)P)(z,w)=P((z,w)x)$.
The group $S^3$ acts on $L^2(S^3,\Sigma_3)$ by $(x\phi)(y)\coloneqq \phi(x\inv y)$.
By Frobenius reciprocity, the decomposition of this representation space into isotypical components is given by
\begin{equation}
\label{eq:frob}
L^2(S^3,\Sigma_3)\cong\overline{\bigoplus_{n\in\N_0}V_n\otimes\Hom(V_n,\Sigma_3)},
\end{equation}
where $v\otimes f$ in $V_n\otimes\Hom(V_n,\Sigma_3)$ correponds to $f(\pi_n(\,.\,)\inv v)\in C^\infty(S^3,\Sigma_3)$.

The result of applying $X_\ell\restr{x}$ to the latter is $f(\pi_{n*}(-X_\ell)\pi_n(x)\inv v)$ and corresponds to the vector
$v\otimes(f\circ\pi_{n*}(-X_\ell))$. Thus, by~\eqref{eq:dirac} we get the following special case of Proposition~1 of~\cite{Ba}:

\begin{proposition}
\label{prop:diracvn}
The restriction of $D$ to the isotypical component $V_n\otimes\Hom(V_n,\Sigma_3)$ of the above decomposition
of $L^2(S^3,\Sigma_3)$ is given by $\Id\otimes D_n$, where
$$D_n(f)=-\sum_{\ell=1}^3 e_\ell\cdot f\circ\pi_{n*}(X_\ell)+\frac12\left(\frac{ab}c+\frac{bc}a+\frac{ca}b\right)e_1\cdot e_2\cdot
e_3\cdot f
$$
for all $f\in\Hom(V_n,\Sigma_3)$.
\end{proposition}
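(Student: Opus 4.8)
The plan is to obtain the formula by specializing B\"ar's Proposition~1 of \cite{Ba} through a direct computation. First observe that, since the spinor bundle $\Sigma S^3=S^3\times\Sigma_3$ is trivial and left translation acts only on the base, the left $S^3$-action on $L^2(S^3,\Sigma_3)$ commutes with Clifford multiplication by the $e_\ell$; together with the left-invariance of the vector fields $X_\ell$, formula~\eqref{eq:dirac} then shows at once that $D$ is $S^3$-equivariant. Hence $D$ preserves each isotypical summand in~\eqref{eq:frob}. On the summand $V_n\otimes\Hom(V_n,\Sigma_3)$ the $S^3$-action is $\pi_n\otimes\Id$ with $\pi_n$ irreducible, so Schur's lemma forces $D\restr{V_n\otimes\Hom(V_n,\Sigma_3)}=\Id\otimes D_n$ for a unique $D_n\in\operatorname{End}(\Hom(V_n,\Sigma_3))$; it remains to identify $D_n$.

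To do so I would evaluate~\eqref{eq:dirac} on the section $\phi$ corresponding to $v\otimes f$, that is $\phi(x)=f(\pi_n(x)\inv v)$. As already recorded in the paragraph preceding the proposition, applying the left-invariant field $X_\ell$ gives $X_\ell\restr{x}(\phi)=f(\pi_{n*}(-X_\ell)\pi_n(x)\inv v)$ — the single sign reversal coming from the inverse $\pi_n(x)\inv v$ in the Frobenius correspondence, which turns $x\exp(tX_\ell)$ into $\pi_n(\exp(-tX_\ell))$ — so that $X_\ell\restr{x}(\phi)$ is the section attached to $v\otimes(f\circ\pi_{n*}(-X_\ell))$. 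Feeding this into~\eqref{eq:dirac} and using that $e_\ell\cdot$ and $e_1\cdot e_2\cdot e_3\cdot$ act pointwise on the target $\Sigma_3$, hence commute with precomposition by $\pi_{n*}(X_\ell)$ and with the identification $v\otimes f\leftrightarrow f(\pi_n(\,.\,)\inv v)$, one sees that $D\phi$ corresponds to
$$v\otimes\Bigl(\sum_{\ell=1}^3 e_\ell\cdot\bigl(f\circ\pi_{n*}(-X_\ell)\bigr)+\tfrac12\bigl(\tfrac{ab}c+\tfrac{bc}a+\tfrac{ca}b\bigr)\,e_1\cdot e_2\cdot e_3\cdot f\Bigr).$$
Since $f\circ\pi_{n*}(-X_\ell)=-f\circ\pi_{n*}(X_\ell)$ by linearity of $\pi_{n*}$, the first sum equals $-\sum_\ell e_\ell\cdot f\circ\pi_{n*}(X_\ell)$, and this is exactly the claimed expression for $D_n(f)$; by linearity over $v$ this identifies $D\restr{V_n\otimes\Hom(V_n,\Sigma_3)}$ with $\Id\otimes D_n$.

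The computation is essentially routine, and the main point to get right is the sign bookkeeping: one must check that there is exactly one reversal of sign — from the inverse in $\pi_n(x)\inv v$ — and that neither the left-invariance of the $X_\ell$ nor the passage of Clifford multiplication through the argument of $f$ introduces a further one. A secondary point, trivial here but worth stating explicitly, is that on the trivial bundle $S^3\times\Sigma_3$ the Clifford action lives entirely in the $\Sigma_3$-factor, which is precisely what makes it commute with the $\pi_n$-action and legitimizes writing the restriction of $D$ in the form $\Id\otimes D_n$.
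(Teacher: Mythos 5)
Your proof is correct and follows essentially the same route as the paper, which obtains the proposition from the computation $X_\ell\restr{x}(\phi)=f(\pi_{n*}(-X_\ell)\pi_n(x)\inv v)$ in the paragraph preceding the statement together with formula~\eqref{eq:dirac}, citing B\"ar's Proposition~1. You merely make explicit what the paper leaves to that citation (the equivariance/Schur argument for the block form $\Id\otimes D_n$ and the sign bookkeeping), and these details check out.
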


In the above formula, $X_\ell\in T_e\SU(2)=\su(2)$ has to be interpreted as the vector corresponding to $X_\ell\in T_1 S^3$ under the identification given at the beginning of this section. That is:
$$X_1=a\left(\begin{matrix}i&\hphantom{-}0\\0&-i\end{matrix}\right), \
X_2=b\left(\begin{matrix}\hphantom{-}0&1\\-1&0\end{matrix}\right), \
X_3=c\left(\begin{matrix}0&i\\i&0\end{matrix}\right)
$$

\begin{corollary}
\label{cor:speccollection}
For the eigenvalue spectrum $\spec(D)$ of $D$ we have
$$\spec(D)=\{\lambda\in\R\mid\lambda\text{ is an eigenvalue of }D_n\text{ for some }n\in\N_0\},
$$
and if $\lambda$ is an eigenvalue of $D_n$ with multiplicity~$m$, then the contribution of~$D_n$
to the total multiplicity of $\lambda$ as an eigenvalue of~$D$ is $\;m\,\dimm(V_n)=m(n+1)$.
\end{corollary}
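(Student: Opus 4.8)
The plan is to read off the corollary from the Hilbert space decomposition~\eqref{eq:frob} together with Proposition~\ref{prop:diracvn}, exploiting that $D$ is elliptic and $\SU(2)$-equivariant. Since $\gabc$ is left-invariant, $D$ commutes with the left action of $S^3$ on $L^2(S^3,\Sigma_3)$; hence $D$ preserves each $\SU(2)$-isotypical component, that is, each finite-dimensional summand $W_n\coloneqq V_n\otimes\Hom(V_n,\Sigma_3)$ of~\eqref{eq:frob}, and by Proposition~\ref{prop:diracvn} it acts on $W_n$ as $\Id\otimes D_n$. In particular $D$ also preserves each orthogonal complement $\bigoplus_{m\ne n}W_m$, so it commutes with the orthogonal projection onto $W_n$.

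First I would check that every eigenspinor of $D$ is a finite sum of eigenspinors, each lying in a single $W_n$. An eigenspinor $\phi$ of $D$ for an eigenvalue $\lambda$ is smooth by elliptic regularity; viewing it in $L^2$ and decomposing $\phi=\sum_n\phi_n$ with $\phi_n\in W_n$ via~\eqref{eq:frob}, the fact that $D$ commutes with the orthogonal projections onto the $W_n$ gives $D\phi_n=\lambda\phi_n$ for every~$n$, so each nonzero $\phi_n$ is a $\lambda$-eigenspinor of $D|_{W_n}=\Id\otimes D_n$. Since $D$ is elliptic on the closed manifold $S^3$, its spectrum is discrete with finite multiplicities, so only finitely many $\phi_n$ are nonzero. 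This yields $\spec(D)=\bigcup_{n\in\N_0}\spec(\Id\otimes D_n)$.

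It then remains to compute $\spec(\Id\otimes D_n)$ with multiplicities. One has $\spec(\Id_{V_n}\otimes D_n)=\spec(D_n)$, and for an eigenvalue $\lambda$ of $D_n$ whose eigenspace $E_\lambda\subseteq\Hom(V_n,\Sigma_3)$ has dimension~$m$, the $\lambda$-eigenspace of $\Id\otimes D_n$ in $W_n$ is $V_n\otimes E_\lambda$, of dimension $\dimm(V_n)\,m=(n+1)m$. As the $W_n$ are mutually orthogonal, the total multiplicity of $\lambda$ as an eigenvalue of $D$ is the sum of these contributions over $n\in\N_0$ (all but finitely many of which vanish), so the contribution coming from $D_n$ is $m(n+1)$, as claimed.

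There is no genuinely hard step here; the only point that requires a little care is that~\eqref{eq:frob} is \emph{a priori} only a completed, Hilbert-space decomposition, so one must rule out that spectrum or multiplicity could be ``lost'' in the completion. This is precisely what ellipticity of $D$ (smoothness of eigenspinors and discreteness of the spectrum) together with $\SU(2)$-equivariance (so that $D$ respects the decomposition and each eigenspace splits along it) provide.
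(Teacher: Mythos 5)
Your argument is correct and follows essentially the same route the paper takes implicitly: the corollary is read off from the Frobenius reciprocity decomposition~\eqref{eq:frob} together with Proposition~\ref{prop:diracvn}, with the standard facts about ellipticity, self-adjointness, and $\SU(2)$-equivariance supplying the functional-analytic details you spell out. Nothing further is needed.
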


\begin{remark}
Let $\SO(3)=S^3/\{\pm1\}$ be endowed with the orientation and Riemannian metric $\gabc$ inherited from~$S^3$.
There are precisely two spin structures over $(\SO(3),\gabc)$, corresponding to the two homomorphisms
$$\alpha_0:\{\pm1\}\ni\gamma\mapsto 1\in\Spin(3)\ \ \text{and}\ \ \alpha_1:\{\pm1\}\ni\gamma\mapsto\gamma\in\Spin(3).
$$
According to Lemma~4 of~\cite{Ba},
the associated spinor bundles are isomorphic to
$$\Sigma_{\alpha_j}\SO(3)\coloneqq S^3\times_{\rho\circ\alpha_j}\Sigma_3,
$$
for $j\in\{0,1\}$, where $\rho:\Spin(3)\to\U(\Sigma_3)$ is the spin representation.
In particular, smooth sections of $\Sigma_{\alpha_j}\SO(3)$ can be viewed as those smooth maps $\phi:S^3\to\Sigma_3$ which
are equivariant with respect to $\rho\circ\alpha_j$. Note that $(\rho\circ\alpha_0)(-1)=\Id_{\Sigma_3}$ and
$(\rho\circ\alpha_1)(-1)=-\Id_{\Sigma_3}$. Thus, smooth sections of $\Sigma_{\alpha_0}\SO(3)$ or $\Sigma_{\alpha_1}\SO(3)$
can be viewed as those smooth maps $\phi:S^3\to\Sigma_3$ which are $\{\pm1\}$ invariant or anti-invariant, respectively.
Moreover, Theorem~1 of~\cite{Ba} implies that the corresponding
Dirac operators are just the restrictions of~$D$ to these subspaces of sections.
Since we have $\pi_n(-x)=(-1)^n\pi_n(x)$ for all $x\in S^3$ and $n\in\N_0$, where $\pi_n$ are the above irreducible representations
of $S^3\cong\SU(2)$, we conclude that the sections of $\Sigma_{\alpha_0}\SO(3)$ or $\Sigma_{\alpha_1}\SO(3)$ correspond to those
summands in~\eqref{eq:frob} where $n$ is even or odd, respectively.  Thus, the following corollary follows from Corollary~\ref{cor:speccollection}.
\end{remark}

\begin{corollary}
\label{cor:speccollectionquot}
For $j\in\{0,1\}$ let $D^{\alpha_j}$ denote the Dirac operator on $\Sigma_{\alpha_j}\SO(3)$. 
For the corresponding eigenvalue spectra we then have
\begin{align*}
\spec(D^{\alpha_0})&=\{\lambda\in\R\mid\lambda\text{ is an eigenvalue of }D_n\text{ for some even }n\in\N_0\},\\
\spec(D^{\alpha_1})&=\{\lambda\in\R\mid\lambda\text{ is an eigenvalue of }D_n\text{ for some odd }n\in\N_0\},
\end{align*}
with multiplicities as described in Corollary~\ref{cor:speccollection}.
\end{corollary}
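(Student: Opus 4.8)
The plan is to derive this corollary directly from the immediately preceding remark together with Corollary~\ref{cor:speccollection}; essentially all of the setup has already been done, so only a short argument remains.

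First I would recall from the preceding remark that, under the identification of smooth sections of $\Sigma_{\alpha_0}\SO(3)$ (resp.\ $\Sigma_{\alpha_1}\SO(3)$) with maps $\phi\colon S^3\to\Sigma_3$, these sections are precisely the $\{\pm1\}$-invariant (resp.\ $\{\pm1\}$-anti-invariant) maps, and that $D^{\alpha_0}$ and $D^{\alpha_1}$ are the restrictions of~$D$ to these closed subspaces of $L^2(S^3,\Sigma_3)$. Then I would determine how the orthogonal, $D$-invariant decomposition~\eqref{eq:frob} interacts with this splitting. By~\eqref{eq:frob}, an element $v\otimes f$ of the summand $V_n\otimes\Hom(V_n,\Sigma_3)$ corresponds to the map $y\mapsto f(\pi_n(y)\inv v)$; since $-1$ is central in $S^3$ and $\pi_n(-1)=(-1)^n\Id_{V_n}$, the substitution $y\mapsto -y$ sends this map to $(-1)^n$ times itself. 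Hence the summand $V_n\otimes\Hom(V_n,\Sigma_3)$ lies entirely in the $\{\pm1\}$-invariant subspace if $n$ is even and entirely in the $\{\pm1\}$-anti-invariant subspace if $n$ is odd. Since the invariant and anti-invariant subspaces together exhaust $L^2(S^3,\Sigma_3)=\overline{\bigoplus_{n\in\N_0}V_n\otimes\Hom(V_n,\Sigma_3)}$, we obtain, still as orthogonal, $D$-invariant decompositions,
$$L^2(\Sigma_{\alpha_0}\SO(3))\cong\overline{\bigoplus_{n\text{ even}}V_n\otimes\Hom(V_n,\Sigma_3)}\quad\text{and}\quad L^2(\Sigma_{\alpha_1}\SO(3))\cong\overline{\bigoplus_{n\text{ odd}}V_n\otimes\Hom(V_n,\Sigma_3)}.$$

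Finally I would invoke Proposition~\ref{prop:diracvn}, according to which $D$ acts as $\Id\otimes D_n$ on $V_n\otimes\Hom(V_n,\Sigma_3)$. Restricting the two decompositions above to their even, resp.\ odd, parts then shows that $\spec(D^{\alpha_0})$, resp.\ $\spec(D^{\alpha_1})$, equals the union of $\spec(D_n)$ over the even, resp.\ odd, $n\in\N_0$, and that an eigenvalue $\lambda$ of~$D_n$ of multiplicity~$m$ contributes $m\,\dimm(V_n)=m(n+1)$ to the multiplicity of~$\lambda$ for~$D^{\alpha_j}$, exactly as in Corollary~\ref{cor:speccollection}. There is essentially no obstacle here; the one point to get right is the elementary observation $\pi_n(-1)=(-1)^n\Id_{V_n}$, which is what makes the substitution $y\mapsto-y$ act as the scalar $(-1)^n$ on the $n$-th summand of~\eqref{eq:frob} and thereby pins down which summands survive for each of the two spin structures on $\SO(3)$.
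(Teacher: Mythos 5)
Your argument is correct and is essentially the paper's own: the paper proves this corollary in the remark immediately preceding it, using exactly the identification of sections of $\Sigma_{\alpha_j}\SO(3)$ with $\{\pm1\}$-invariant resp.\ anti-invariant maps, the relation $\pi_n(-x)=(-1)^n\pi_n(x)$ to sort the summands of~\eqref{eq:frob} by the parity of~$n$, and then Corollary~\ref{cor:speccollection}. Your write-up just makes the parity computation and the multiplicity bookkeeping slightly more explicit.
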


\begin{remark}
\label{rem:matrices}

(i)
With respect to the basis $\{P_0,\ldots,P_n\}$ of $V_n$, the endomorphisms $\pi_{n*}(X_\ell)$ are given as follows (where terms involving
``$P_{-1}$'' or ``$P_{n+1}$'' have to be interpreted as zero):
\begin{equation*}
\begin{aligned}
\pi_{n*}(X_1)(P_k)&=ai(n-2k)P_k\\
\pi_{n*}(X_2)(P_k)&=bkP_{k-1}-b(n-k)P_{k+1}\\
\pi_{n*}(X_3)(P_k)&=cikP_{k-1}+ci(n-k)P_{k+1}
\end{aligned}
\end{equation*}
for $k=0,\ldots,n$.

\vskip5pt
(ii)
We choose a basis $\{Z_1,Z_2\}$ of $\Sigma_3\cong\C^2$ with respect to which
Clifford multiplication by the standard basis vectors $e_1,e_2,e_3\in\R^3$ is
given by
$$\left(\begin{smallmatrix}i&\hphantom{-}0\\0&-i\end{smallmatrix}\right), \
\left(\begin{smallmatrix}\hphantom{-}0&1\\-1&0\end{smallmatrix}\right), \
\left(\begin{smallmatrix}0&i\\i&0\end{smallmatrix}\right),
$$
respectively. (In particular, $e_1\cdot e_2\cdot e_3\cdot Z=-Z$ for all $Z\in\Sigma_3$.)

\vskip5pt
(iii)
In B\"ar's notation in Section~4 of~\cite{Ba}, the latter three matrices correspond to the Clifford multiplications of $e_3,-e_2,e_1$, respectively. In view of the formula from Proposition~\ref{prop:diracvn}, this harmonizes with the fact that our $X_1,X_2,X_3$, too, correspond to B\"ar's $X_3,-X_2,X_1$ up to scaling. Note that the case of Berger spheres treated in Sections~4 to~6 of~\cite{Ba} corresponds to $a=\frac1T$ and $b=c=1$ in our notation. 
\end{remark}

\begin{definition}
\label{def:CAB}

(i)
For any given $n$, we consider, as in~\cite{Ba}, the basis $\{A_0,\ldots,A_n,B_1,\ldots,B_n\}$ of $\Hom(V_n,\Sigma_3)$ given by
\begin{equation*}
A_k(P_m)=\begin{cases}Z_1,&k=m,\text{ $k$ even,}\\Z_2,&k=m,\text{ $k$ odd,}\\0,&\text{otherwise,}\end{cases}\text{\ \ \ \ }
B_k(P_m)=\begin{cases}Z_1,&k=m,\text{ $k$ odd,}\\Z_2,&k=m,\text{ $k$ even,}\\0,&\text{otherwise,}\end{cases}
\end{equation*}
for $k,m\in\{0,\ldots,n\}$.

\vskip5pt
(ii)
We abbreviate
\begin{equation}
\label{eq:defC}
C\coloneqq \frac12\left(\frac{ab}c+\frac{bc}a+\frac{ca}b\right).
\end{equation}
\end{definition}

As it turns out, the two subspaces of $\Hom(V_n,\Sigma_3)$ spanned by $\{A_0,\ldots,A_n\}$ and $\{B_0,\ldots,B_n\}$, respectively,
are invariant under the above operators $D_n$. More precisely,
using the above definitions and Remark~\ref{rem:matrices}(i),~(ii), one easily obtains the following generalization
of B\"ar's formulas on p.~74 of~\cite{Ba} (which correspond to the case $a=\frac1T$ and $b=c=1$):

\begin{corollary}
\label{cor:dnexpl}
$$D_n=D'_n-C\,\Id,
$$
where $D'_n$ is given by
\begin{equation*}
\begin{aligned}
D'_n(A_k)&=\begin{cases}(c-b)(n-k+1)A_{k-1}+a(n-2k)A_k+(c+b)(k+1)A_{k+1},&\text{ $k$ even},\\
(c+b)(n-k+1)A_{k-1}-a(n-2k)A_k+(c-b)(k+1)A_{k+1},&\text{ $k$ odd},\end{cases}\\
D'_n(B_k)&=\begin{cases}
(c+b)(n-k+1)B_{k-1}-a(n-2k)B_k+(c-b)(k+1)B_{k+1},&\text{ $k$ even},\\
(c-b)(n-k+1)B_{k-1}+a(n-2k)B_k+(c+b)(k+1)B_{k+1},&\text{ $k$ odd},\end{cases}
\end{aligned}
\end{equation*}
for $k\in\{0,\ldots,n\}$.
\end{corollary}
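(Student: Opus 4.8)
The proof is a direct computation: Corollary~\ref{cor:dnexpl} merely records the matrix of the operator $D_n$ from Proposition~\ref{prop:diracvn} in the basis $\{A_0,\dots,A_n,B_0,\dots,B_n\}$ of $\Hom(V_n,\Sigma_3)$ fixed in Definition~\ref{def:CAB}(i). First I would use Remark~\ref{rem:matrices}(ii): since $e_1\cdot e_2\cdot e_3\cdot Z=-Z$ for every $Z\in\Sigma_3$, the second summand in the formula of Proposition~\ref{prop:diracvn} equals $-Cf$, so that $D_n=D'_n-C\,\Id$ with $D'_n(f)=-\sum_{\ell=1}^3 e_\ell\cdot\bigl(f\circ\pi_{n*}(X_\ell)\bigr)$. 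It then remains to evaluate $D'_n$ on each $A_k$ and each $B_k$ and to read off the coefficients.

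For fixed $k$ and $\ell\in\{1,2,3\}$ I would compute the homomorphism $A_k\circ\pi_{n*}(X_\ell)\colon V_n\to\Sigma_3$ by evaluating it on the basis $P_0,\dots,P_n$ of $V_n$ via the explicit formulas for $\pi_{n*}(X_\ell)$ in Remark~\ref{rem:matrices}(i). One finds $A_k\circ\pi_{n*}(X_1)=ai(n-2k)A_k$, while for $\ell=2,3$ the composition is supported on $P_{k-1}$ and $P_{k+1}$ only, taking on each of these a scalar multiple of the single vector $A_k(P_k)$, with scalars read off from Remark~\ref{rem:matrices}(i) and with the boundary conventions $P_{-1}=P_{n+1}=0$. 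Applying the Clifford matrices of Remark~\ref{rem:matrices}(ii) --- explicitly $e_1\cdot Z_1=iZ_1$, $e_1\cdot Z_2=-iZ_2$, $e_2\cdot Z_1=-Z_2$, $e_2\cdot Z_2=Z_1$, $e_3\cdot Z_1=iZ_2$, $e_3\cdot Z_2=iZ_1$ --- and then re-expressing the resulting $\Sigma_3$-valued homomorphism (supported on $P_{k-1},P_k,P_{k+1}$) in terms of $A_{k-1},A_k,A_{k+1}$, the sum $-\sum_\ell e_\ell\cdot(A_k\circ\pi_{n*}(X_\ell))$ comes out to be exactly the claimed expression for $D'_n(A_k)$; the computation for $B_k$ is identical up to interchanging the roles of $Z_1$ and $Z_2$, which is precisely what produces the sign differences between the two displays. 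In particular this confirms the assertion in the text preceding the corollary that $\spann\{A_0,\dots,A_n\}$ and $\spann\{B_0,\dots,B_n\}$ are invariant under $D_n$.

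The one point requiring care --- more bookkeeping than genuine obstacle --- is the parity of $k$: whether $A_k(P_k)$ equals $Z_1$ or $Z_2$ governs both how the $e_\ell\cdot$ act and to which of $A_{k-1},A_{k+1}$ the output vectors belong, and since $k$ and $k\pm1$ have opposite parities this mismatch is exactly what turns the symmetric scalars $b,c$ into the asymmetric coefficients $c\pm b$. One should also note that the boundary cases $k=0$ and $k=n$ require no separate treatment: at $k=0$ there is no basis vector $P_{-1}$ of $V_n$, so the $A_{k-1}$-term simply does not arise, consistently with the convention $A_{-1}=0$; similarly at $k=n$ for the $A_{k+1}$-term and the convention $A_{n+1}=0$. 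Everything needed is already assembled in Remark~\ref{rem:matrices} and Definition~\ref{def:CAB}, so the proof is entirely computational.
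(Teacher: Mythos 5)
Your proposal is correct and is exactly the paper's (implicit) argument: the paper simply states that the formulas follow by direct computation from Proposition~\ref{prop:diracvn}, Remark~\ref{rem:matrices}(i),(ii), and Definition~\ref{def:CAB}, which is the computation you carry out, including the correct Clifford actions on $Z_1,Z_2$ and the parity bookkeeping that produces the $c\pm b$ coefficients. No gaps; a spot-check of $D'_n(A_k)$ for $k$ even reproduces the stated coefficients $(c-b)(n-k+1)$, $a(n-2k)$, $(c+b)(k+1)$.
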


Of course, terms involving ``$A_{-1}$'', ``$A_{n+1}$'', ``$B_{-1}$'', or ``$B_{n+1}$'' are to be interpreted as zero in the previous corollary. In the next section, we will determine the eigenvalues of $D$, $D^{\alpha_0}$, $D^{\alpha_1}$ of smallest absolute value under the assumption that the scalar curvature associated with $\gabc$ is strictly positive. To that end, we will need:

\begin{proposition}
\label{prop:scal}
The scalar curvature associated with $\gabc$ on $(S^3,\gabc)$ or $(\SO(3),\gabc)$ is
\begin{align*}
\scal&=4(a^2+b^2+c^2)-2\left(\frac{a^2b^2}{c^2}+\frac{b^2c^2}{a^2}+\frac{c^2a^2}{b^2}\right)
       =8(a^2+b^2+c^2-C^2).
\end{align*}
\end{proposition}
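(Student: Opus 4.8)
The plan is to compute the scalar curvature of the left-invariant metric $\gabc$ directly from the structure constants of $\su(2)$, using the standard formula for the scalar curvature of a left-invariant metric on a Lie group in terms of an orthonormal basis and its structure constants. First I would record the bracket relations: since $[\ii,\jj]=2\kk$ and cyclically, and $X_1=a\ii$, $X_2=b\jj$, $X_3=c\kk$, one gets $[X_1,X_2]=\frac{2ab}{c}X_3$, $[X_2,X_3]=\frac{2bc}{a}X_1$, $[X_3,X_1]=\frac{2ca}{b}X_2$. Abbreviating $\lambda_1\coloneqq\frac{2bc}{a}$, $\lambda_2\coloneqq\frac{2ca}{b}$, $\lambda_3\coloneqq\frac{2ab}{c}$, so that $[X_i,X_j]=\lambda_k X_k$ for $(i,j,k)$ a cyclic permutation of $(1,2,3)$, we are in the situation of a \emph{diagonal} left-invariant metric on a unimodular $3$-dimensional Lie group.

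For such metrics there is the well-known Milnor-type formula (see \cite{Mi}): with $\mu_i\coloneqq\frac12(\lambda_j+\lambda_k-\lambda_i)$ (indices cyclic), the sectional curvatures of the coordinate planes are $K(X_j,X_k)=\mu_j\mu_k-\mu_i(\lambda_i-\mu_i)$ wait — more cleanly, the Ricci curvatures in these directions are the principal Ricci values, and summing gives $\scal=2(\mu_1\mu_2+\mu_2\mu_3+\mu_3\mu_1)-\frac12(\lambda_1^2+\lambda_2^2+\lambda_3^2)$, or equivalently $\scal = 2(\lambda_1\lambda_2+\lambda_2\lambda_3+\lambda_3\lambda_1) - (\lambda_1^2+\lambda_2^2+\lambda_3^2)$. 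I would double-check this constant via the round case $a=b=c=1$, where $\lambda_1=\lambda_2=\lambda_3=2$ and the formula must yield $\scal=6$ (the round unit $S^3$). Then I substitute $\lambda_1=\frac{2bc}{a}$ etc.: the cross terms give $2\bigl(\frac{2bc}{a}\cdot\frac{2ca}{b}+\cdots\bigr)=8(c^2+a^2+b^2)$, and the square terms give $-\bigl(\frac{4b^2c^2}{a^2}+\frac{4c^2a^2}{b^2}+\frac{4a^2b^2}{c^2}\bigr)$, so $\scal = 8(a^2+b^2+c^2) - 4\bigl(\frac{a^2b^2}{c^2}+\frac{b^2c^2}{a^2}+\frac{c^2a^2}{b^2}\bigr)$. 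Hmm — this is \emph{twice} the claimed value; so I must instead use the normalization where $[\ii,\jj]=\jj\kk-\kk\jj=2\kk$ gives $[X_1,X_2]=\frac{ab}{c}\cdot 2 \cdot\frac{1}{?}$... I would be careful here: the excerpt's Remark already states $[X_1,X_2]=\frac{2ab}{c}X_3$, and Proposition~\ref{prop:scal} claims $\scal=4(a^2+b^2+c^2)-2(\cdots)$, so evidently the correct Milnor formula is $\scal=(\lambda_1\lambda_2+\lambda_2\lambda_3+\lambda_3\lambda_1)-\tfrac12(\lambda_1^2+\lambda_2^2+\lambda_3^2)$; the round check then gives $3\cdot 4 - \tfrac12\cdot 12 = 6$, which is correct, so this is the formula I would use, and substitution then yields exactly $4(a^2+b^2+c^2)-2\bigl(\frac{a^2b^2}{c^2}+\frac{b^2c^2}{a^2}+\frac{c^2a^2}{b^2}\bigr)$ as claimed.

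Finally I would verify the second equality $4(a^2+b^2+c^2)-2\bigl(\frac{a^2b^2}{c^2}+\frac{b^2c^2}{a^2}+\frac{c^2a^2}{b^2}\bigr)=8(a^2+b^2+c^2-C^2)$ purely algebraically: expanding $C^2=\frac14\bigl(\frac{ab}{c}+\frac{bc}{a}+\frac{ca}{b}\bigr)^2=\frac14\bigl(\frac{a^2b^2}{c^2}+\frac{b^2c^2}{a^2}+\frac{c^2a^2}{b^2}+2a^2+2b^2+2c^2\bigr)$, so $8C^2 = 2\bigl(\frac{a^2b^2}{c^2}+\frac{b^2c^2}{a^2}+\frac{c^2a^2}{b^2}\bigr)+4(a^2+b^2+c^2)$, hence $8(a^2+b^2+c^2)-8C^2 = 4(a^2+b^2+c^2)-2\bigl(\frac{a^2b^2}{c^2}+\frac{b^2c^2}{a^2}+\frac{c^2a^2}{b^2}\bigr)$, as required. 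The main obstacle is not conceptual but bookkeeping: pinning down the exact normalization constant in the Milnor scalar-curvature formula that is consistent with the stated bracket relations $[X_1,X_2]=\frac{2ab}{c}X_3$ — a factor-of-two error there propagates through everything — so the round-sphere sanity check $\scal=6$ at $a=b=c=1$ is the essential safeguard. One could alternatively bypass the Milnor formula entirely by computing the Levi-Civita connection from Koszul's formula, then $\Ric(X_i,X_i)$, then tracing; this is more work but leaves no ambiguity, and I would fall back on it if the normalization proves fiddly.
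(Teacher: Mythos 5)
Your proposal is correct and follows essentially the same route as the paper, which simply invokes Milnor's curvature formulas for left-invariant metrics on unimodular $3$-dimensional Lie groups with the same identification $\lambda_1=\frac{2bc}{a}$, $\lambda_2=\frac{2ca}{b}$, $\lambda_3=\frac{2ab}{c}$, and obtains the second equality by expanding $C^2$. To avoid reverse-engineering the normalization from the statement being proved, note that Milnor's principal Ricci curvatures $r(X_i)=2\mu_j\mu_k$ with $\mu_i=\frac12(\lambda_j+\lambda_k-\lambda_i)$ sum directly to $\scal=(\lambda_1\lambda_2+\lambda_2\lambda_3+\lambda_3\lambda_1)-\frac12(\lambda_1^2+\lambda_2^2+\lambda_3^2)$, which is exactly the formula you settled on, and your round-sphere check $\scal=6$ at $a=b=c=1$ then serves as confirmation rather than as the source of the constant.
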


The first equality follows from Milnor's formulas on p.\;305/306 of~\cite{Mi} (observing that his $\lambda_1,\lambda_2,\lambda_3$ are $\frac{2bc}a, \frac{2ca}b, \frac{2ab}c$, respectively, in our notation). The second equality follows easily using the definition of~$C$. Note that the scalar curvature is a constant function on $S^3\cong\SU(2)$ and on its Riemannian quotient $(\SO(3),\gabc)=(S^3,\gabc)/\{\pm1\}$ because $\gabc$ is homogeneous.

\Section{The smallest Dirac eigenvalues of \texorpdfstring{$(S^3,\gabc)$}{(S3,gabc)} and \texorpdfstring{$(\SO(3),\gabc)$}{(SO(3),gabc)}}
\label{sec:smallest}

\noindent
This section is devoted to proving Theorem~\ref{thm:first}. As explained in the introduction, it is no loss of generality if we continue
to consider only the standard orientation of $S^3$ and $\SO(3)$ because switching it would just change the sign of the Dirac operators.
Recall that
$$\mu\coloneqq a+b+c-C,
$$
where~$C$ was defined in~\eqref{eq:defC}.
Actually, we will prove the following more explicit result, which, in view of Corollary~\ref{cor:speccollection} and
Corollary~\ref{cor:speccollectionquot}, will imply Theorem~\ref{thm:first}:

\begin{theorem}
\label{thm:firstexplicit}
Assume $\scal>0$. Then we have
\begin{itemize}
\item[(i)] $-C$ is the only eigenvalue of~$D_0$, its multiplicity is~$2$, and\\
$C<\min\{|\lambda|\mid\lambda\in\spec(D_n)\}$ for all even $n\ge2$,
\item[(ii)] $\mu$ is an eigenvalue of~$D_1$ of multiplicity~$1$, $\mu=\min\{|\lambda|\mid\lambda\in\spec(D_1)\}$, and\\
$\mu<\min\{|\lambda|\mid\lambda\in\spec(D_n)\}$ for all odd $n\ge3$.
\end{itemize}
Moreover, $C\ge\mu>0$; $C=\mu$ holds if and only if $a=b=c$.
\end{theorem}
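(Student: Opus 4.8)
The plan is to attack the three parts of Theorem~\ref{thm:firstexplicit} using the explicit tridiagonal form of $D'_n$ from Corollary~\ref{cor:dnexpl} together with the Gershgorin Circle Theorem, exactly as advertised in the introduction. Since $D_n = D'_n - C\,\Id$, eigenvalues of $D_n$ are eigenvalues of $D'_n$ shifted by $-C$, so it suffices to locate the spectrum of $D'_n$. The matrix of $D'_n$ in the basis $\{A_0,\dots,A_n\}$ (and separately $\{B_0,\dots,B_n\}$) is tridiagonal with diagonal entries $\pm a(n-2k)$, and off-diagonal entries built from $(c\pm b)$ times integers; note the two blocks $A$ and $B$ give the same characteristic polynomial up to the obvious symmetry, which already explains the multiplicity~$2$ claims (and multiplicity~$4$ when $a=b=c$ forces extra coincidences). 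For $n=0$ the matrix is the $1\times1$ zero matrix on each of the $A$- and $B$-blocks, so $D_0 = -C\,\Id$ on a $2$-dimensional space, giving part~(i)'s first assertion immediately; for $n=1$ a direct $2\times2$ computation on each block yields eigenvalues whose shifts by $-C$ one checks to be $\{\mu, -(2C-\mu)\}$ or similar, and one verifies $|\mu|\le|{-}(2C-\mu)|$ under $\scal>0$, i.e. $\mu=\min|\spec(D_1)|$.

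The heart of the argument is the two inequalities $C<\min\{|\lambda|: \lambda\in\spec(D_n)\}$ for even $n\ge2$ and $\mu<\min\{|\lambda|:\lambda\in\spec(D_n)\}$ for odd $n\ge3$. For these I would apply Gershgorin to $D'_n$: every eigenvalue $\lambda'$ of $D'_n$ lies in some disc centered at a diagonal entry $d_k = \pm a(n-2k)$ with radius $r_k$ equal to the sum of the absolute values of the two off-diagonal entries in that row, namely $r_k = |c-b|(n-k+1) + (c+b)(k+1)$ (or the variant with the roles swapped, depending on parity of $k$; WLOG $c\ge b\ge a$ by Remark~\ref{rem:permut}, so $|c-b|=c-b$). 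Hence every eigenvalue $\lambda$ of $D_n = D'_n - C$ satisfies $|\lambda + C - d_k| \le r_k$ for some $k$, so $|\lambda| \ge |d_k| - C - r_k$ when the right side is positive — but this naive bound is \emph{too weak}, since $r_k$ can be comparable to $an$. The fix is the standard trick of conjugating $D'_n$ by a positive diagonal matrix $\operatorname{diag}(t_0,\dots,t_n)$ before applying Gershgorin: this replaces the off-diagonal pair in row $k$ by $(t_{k-1}/t_k)\cdot(\text{lower entry})$ and $(t_{k+1}/t_k)\cdot(\text{upper entry})$, leaving the spectrum and the diagonal unchanged. Choosing the $t_k$ appropriately — e.g. a geometric or ratio-optimized weighting that balances the contributions so that the new radius $\tilde r_k$ is uniformly small relative to $|d_k| - C$ — should give $|\lambda|\ge$ something strictly larger than $C$ (resp.\ than $\mu$) for all $k$ and all $n$ in the relevant range. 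The extremal rows to watch are $k$ near $0$ or $n$ (where $|d_k|\approx an$ is largest but one off-diagonal neighbor is missing) and $k$ near $n/2$ (where $d_k\approx 0$, so one needs the radius genuinely small there); the case $n=2$ (and $n=3$) may need to be handled by hand since the asymptotic weighting is least effective for small $n$.

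I expect the main obstacle to be exactly this choice of diagonal conjugation weights $t_k$ making the Gershgorin estimate sharp enough to beat $C$ uniformly: one must exploit the constraint $\scal>0$, i.e. $a^2+b^2+c^2>C^2$, which is what prevents the eigenvalues from drifting toward zero (compare B\"ar's Berger-sphere example in the introduction, where without $\scal>0$ eigenvalues do cross zero). Concretely I anticipate needing to show that $\scal>0$ forces $a,b,c$ to be "not too spread out" — for instance one should extract from $a^2+b^2+c^2>C^2$ bounds like $c < a+b$ (so the triangle inequality holds among $a,b,c$) and consequently $c-b < a$, which makes the off-diagonal entry $(c-b)(n-k+1)$ genuinely smaller than the $(c+b)(k+1)$ term can be compensated for. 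Once those structural consequences of $\scal>0$ are in hand, the weighted Gershgorin bound becomes a finite amount of elementary (if tedious) estimation. Finally, for the concluding line "$C\ge\mu>0$; $C=\mu$ iff $a=b=c$": writing $2C - 2\mu = 2C - 2(a+b+c-C) = 4C - 2(a+b+c) = \frac{2ab}{c}+\frac{2bc}{a}+\frac{2ca}{b} - 2(a+b+c)$, one groups terms as $\frac{2}{c}(ab - \tfrac{c}{2}(a+b))$-type expressions, or more cleanly multiplies through by $abc>0$ to get $abc(C-\mu) = \tfrac12\bigl(a^2b^2 + b^2c^2 + c^2a^2\bigr) - abc(a+b+c) = \tfrac12\sum_{\text{cyc}}(ab - bc)^2\cdot(\text{something})$; in fact $a^2b^2+b^2c^2+c^2a^2 - abc(a+b+c) = \tfrac12\bigl((ab-bc)^2+(bc-ca)^2+(ca-ab)^2\bigr)\ge0$ with equality iff $ab=bc=ca$ iff $a=b=c$, giving $C\ge\mu$ with the stated equality case; and $\mu>0$ follows since $\scal>0$ gives $a^2+b^2+c^2 > C^2$ hence (with $a+b+c \ge \sqrt{a^2+b^2+c^2} > C$ using $a,b,c>0$) we get $\mu = a+b+c - C > 0$.
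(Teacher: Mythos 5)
Your treatment of $n=0$, $n=1$, and of the final assertion $C\ge\mu>0$ (with equality iff $a=b=c$) is essentially correct and matches the paper's Lemma~\ref{lem:bounds} and Remark~\ref{rem:smalln}(i),(iv), up to a harmless factor slip in $abc(C-\mu)$. But the heart of the theorem --- $C<\min\{|\lambda|\}$ on $\spec(D_n)$ for even $n\ge2$ and $\mu<\min\{|\lambda|\}$ for odd $n\ge3$ --- is left as a plan, and the plan as stated cannot work. You propose (weighted) Gershgorin applied to the tridiagonal matrices of $D_n=D_n'-C\,\Id$, with a diagonal conjugation to shrink the radii. Diagonal conjugation does not move the Gershgorin \emph{centers}, and some of these centers already lie inside the band you must exclude: for even $n$ the row $k=n/2$ has diagonal entry $a(n-2k)-C=-C$, so that disc always contains points of modulus $<C$ no matter how small its (positive) radius; for odd $n$, e.g.\ for the round metric $a=b=c$, the rows with $n-2k=\pm1$ have centers $\pm a-C\in(-\mu,\mu)$. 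Hence no choice of weights can make the union of discs avoid the forbidden interval, and row-Gershgorin on $D_n$ itself is structurally blocked. This is precisely why the paper applies Gershgorin to the \emph{squares} $\mathcal{A}_n^2,\mathcal{B}_n^2$ (pentadiagonal), where the diagonal entries contain $(a(n-2k)\mp C)^2$ plus sums of squares of off-diagonal terms and so stay large even in the middle rows; it then proves the monotonicity $G(n+2,k+1)>G(n,k)$ under $\scal>0$ (Lemma~\ref{lem:triangle}, via Lemma~\ref{lem:bounds}(v)) and checks base cases along the two edges of the triangle (Proposition~\ref{prop:base}).

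Two further concrete problems. First, the structural consequence you hope to extract from $\scal>0$, namely $c<a+b$ (triangle inequality among $a,b,c$), is false: $(a,b,c)=(1,1,10)$ has $C=10.05$, $C^2<a^2+b^2+c^2=102$, hence $\scal>0$, yet $c>a+b$. What $\scal>0$ actually yields is the triangle inequality among the products $ab,bc,ca$ (equivalently among $1/a,1/b,1/c$), i.e.\ Lemma~\ref{lem:bounds}(iv), or the bound $C<\min\{a+\tfrac{bc}a,\,b+\tfrac{ca}b,\,c+\tfrac{ab}c\}$ of Lemma~\ref{lem:bounds}(v); any repaired estimate must run through inequalities of this type. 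Second, even in the paper's squared-Gershgorin framework the small cases are not covered by the general estimate: $G(2,0)$, $G(4,0)$ can drop below $C^2$ and $G(3,0)$ below $\mu^2$ for metrics with $\scal>0$ arbitrarily close to $(a,b,c)=(1,1,\tfrac12)$, so $n=2,3,4$ must be handled by explicit characteristic polynomials and eigenvalues as in Remark~\ref{rem:smalln}(ii),(iii),(v); your proposal anticipates hand treatment only of $n=2,3$ and gives no workable mechanism for any $n\ge2$. So the central inequalities remain unproven.
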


We start with some elementary estimates:

\begin{lemma}\
\label{lem:bounds}
\begin{itemize}
\item[(i)] $C^2\ge\max\{a^2+b^2,b^2+c^2,c^2+a^2\}$. In particular, $C>\max\{a,b,c\}$.
\item[(ii)] $2C\ge a+b+c$; in particular, $C\ge\mu$. Equality holds if and only if $a=b=c$.
\item[(iii)] $\scal>0$ is equivalent to $C^2<a^2+b^2+c^2$.\\
In particular, $\scal>0$ implies $C<a+b+c$; that is, $\mu>0$.
\item[(iv)] $\scal>0$ is equivalent to the following being simultaneously satisfied:\\
$ab<c(a+b)$ and $bc<a(b+c)$ and $ca<b(c+a)$.
\item[(v)] $\scal>0$ is equivalent to
$C<\min\{a+\frac{bc}a,\,b+\frac{ca}b,\,c+\frac{ab}c\}$.
\end{itemize}
\end{lemma}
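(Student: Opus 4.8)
The plan is to first make a change of variables that linearizes $C$. Set $u\coloneqq\sqrt{bc/a}$, $v\coloneqq\sqrt{ca/b}$, $w\coloneqq\sqrt{ab/c}$; as $(a,b,c)$ ranges over all positive triples so does $(u,v,w)$, with inverse $a=vw$, $b=wu$, $c=uv$, and — crucially — $C=\tfrac12(u^2+v^2+w^2)$ while $a^2+b^2+c^2=u^2v^2+v^2w^2+w^2u^2$. Together with Proposition~\ref{prop:scal} this turns the scalar curvature into
$\scal=8(a^2+b^2+c^2-C^2)=2(u+v+w)(v+w-u)(w+u-v)(u+v-w)$,
the Heron product for a ``triangle'' with side lengths $u,v,w$. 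Since all five items are identities or logical equivalences, it suffices to prove them in the $(u,v,w)$ coordinates, where each of them becomes an elementary sum-of-squares or sign argument.

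For (i): $C^2-(a^2+b^2)=\tfrac14(u^2+v^2+w^2)^2-w^2(u^2+v^2)=\tfrac14(u^2+v^2-w^2)^2\ge0$, and cyclically; since $u,v,w>0$ this also gives $C^2>\max\{a,b,c\}^2$, hence $C>\max\{a,b,c\}$. For (ii): $2C-(a+b+c)=(u^2+v^2+w^2)-(vw+wu+uv)=\tfrac12\bigl((u-v)^2+(v-w)^2+(w-u)^2\bigr)\ge0$, with equality if and only if $u=v=w$, i.e.\ $a=b=c$; and $\mu=a+b+c-C\le 2C-C=C$. Item (iii) is immediate from $\scal=8(a^2+b^2+c^2-C^2)$, together with $C^2<a^2+b^2+c^2\le(a+b+c)^2$, which forces $C<a+b+c$, i.e.\ $\mu>0$.

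For (iv) and (v) I would exploit the Heron factorization. Write $p\coloneqq v+w-u$, $q\coloneqq w+u-v$, $r\coloneqq u+v-w$, so that $p+q=2w$, $q+r=2u$, $r+p=2v$ are all positive; hence at most one of $p,q,r$ can be nonpositive, and therefore $pqr>0$ holds if and only if $p,q,r$ are all positive. Since $\scal=2(u+v+w)\,pqr$ and $u+v+w>0$, we get $\scal>0\iff p,q,r>0$. Now $p>0$ is equivalent to $u(v+w)>u^2$, i.e.\ (using $uv=c$, $uw=b$, $u^2=bc/a$) to $bc<a(b+c)$, and $q>0$, $r>0$ give the other two inequalities similarly; this proves (iv). For (v), a direct computation gives $a+\tfrac{bc}a-C=vw+u^2-\tfrac12(u^2+v^2+w^2)=\tfrac12\bigl(u^2-(v-w)^2\bigr)=\tfrac12\,qr$, and cyclically $b+\tfrac{ca}b-C=\tfrac12\,rp$ and $c+\tfrac{ab}c-C=\tfrac12\,pq$; hence $C<\min\{a+\tfrac{bc}a,\,b+\tfrac{ca}b,\,c+\tfrac{ab}c\}$ holds if and only if $qr,rp,pq$ are all positive, which by the same pairwise-sum argument is equivalent to $p,q,r>0$, i.e.\ to $\scal>0$.

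The only genuine idea here is the substitution $a=vw$, $b=wu$, $c=uv$ and the resulting Heron factorization of $\scal$: without it, items (iv) and (v) look like unmotivated factorings of quartic and cubic polynomials, whereas afterwards the whole lemma is bookkeeping. The one place that needs a little care is the elementary sign analysis of a product $pqr$ of three reals whose pairwise sums are positive, but that is a one-line observation.
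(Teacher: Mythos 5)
Your proof is correct, and despite the change of variables it is essentially the paper's own argument: the paper's factorization~\eqref{eq:scalfactor} of $\scal$ is exactly your Heron product written in the quantities $ab,bc,ca$ (which are proportional to $w,u,v$), and its ``at most one factor can be negative'' sign argument for (iv), its pairwise-product identity $(ab+bc-ca)(ab-bc+ca)=2abc\bigl(c+\tfrac{ab}c-C\bigr)$ for (v), and its sum-of-squares computations for (i) and (ii) coincide with yours term by term. The substitution $a=vw$, $b=wu$, $c=uv$ is a tidy repackaging of the same method rather than a different route.
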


\begin{proof}
(i)
$C^2\ge C^2-\frac14(-\frac{ab}c+\frac{bc}a+\frac{ca}b)^2=\frac14(\frac{ab}c+\frac{bc}a+\frac{ca}b)^2-\frac14(-\frac{ab}c+\frac{bc}a+\frac{ca}b)^2=a^2+b^2$.
Similarly, $C^2\ge b^2+c^2$ and $C^2\ge c^2+a^2$.

\vskip5pt
(ii)
\begin{equation*}
\begin{aligned}(ab)^2+(bc)^2+(ca)^2&=\tfrac12((ab)^2+(bc)^2)+\tfrac12((bc)^2+(ca)^2)+\tfrac12((ca)^2+(ab)^2)\\
&\ge ab\cdot bc+bc\cdot ca+ca\cdot ab=abc(a+b+c).
\end{aligned}
\end{equation*}
The claimed estimate follows by dividing by $abc$, and equality holds if and only if $ab=bc=ca$, that is, $a=b=c$.

\vskip5pt
(iii)
This is clear from Proposition~\ref{prop:scal}.

\vskip5pt
(iv)
Using Proposition~\ref{prop:scal} one easily verifies that
\begin{equation}
\label{eq:scalfactor}
\scal=\tfrac2{a^2b^2c^2}(ab+bc+ca)(ab+bc-ca)(ab-bc+ca)(-ab+bc+ca).
\end{equation}
It is not possible that more than one of the last three factors is negative because the sum of any two such factors is positive.
Thus, $\scal>0$ is equivalent to all factors being positive.

\vskip5pt
(v)
{}From the previous observation, $\scal>0$ is also equivalent to any two of the last three factors in~\eqref{eq:scalfactor}
having the same
sign, that is, $(ab+bc-ca)(ab-bc+ca)>0$ and the two analogous inequalities obtained by cyclic permutations. The statement now
follows from
$$(ab+bc-ca)(ab-bc+ca)=a^2b^2-(bc-ca)^2=abc(\tfrac{ab}c-\tfrac{bc}a-\tfrac{ca}b+2c)=2abc(-C+c+\tfrac{ab}c)
$$
and the two analogous equalities obtained by cyclic permutations.
\end{proof}

In order to prove Theorem~\ref{thm:firstexplicit}, we will use an inductive argument using the Gershgorin Circle Theorem applied to the operators $D_n^2$. That argument, however, will not work for small~$n$.
Therefore, we first treat the individual cases $n=0,2,4$ and $n=1,3$.

\begin{remark}
\label{rem:smalln}

In this remark, we always assume
$$
\scal>0.
$$
We use the explicit formulas for $D_n=D'_n-C\Id$ from Corollary~\ref{cor:dnexpl}, and we denote by $\mathcal{A}'_n$ and $\mathcal{B}'_n$ the matrices expressing the restrictions of $D'_n$ to the invariant subspaces $\spann\{A_0,\ldots,A_n\}$ and $\spann\{B_0,\ldots,B_n\}$, respectively.

\vskip5pt
(i)
$\mathbf{n=0}$: We have $D'_0=0$, hence $D_0=-C\Id$. In particular, $-C$ is the only eigenvalue of $D_0$. The dimension of
$\Hom(V_0,\Sigma_3)\cong\Hom(\C,\C^2)$ on which $D_0$ lives is $2$, so $-C$ has multiplicity~$2$ as an eigenvalue of~$D_0$.

\vskip5pt
(ii)
$\mathbf{n=2}$:
One easily calculates that the characteristic polynomials of $\mathcal{A}'_2$ and $\mathcal{B}'_2$ coincide and are given by
$$\chi_2(x)\coloneqq x^3-4(a^2+b^2+c^2)x-16abc.
$$
Note that $\chi_2$ is strictly convex on $[0,\infty)$ since $\chi_2''(x)=6x$. Moreover, $\chi_2(0)=-16abc<0$ and
$\chi_2(2C)=8C^3-8(a^2+b^2+c^2)C-16abc=-8C(a^2+b^2+c^2-C^2)-16abc<0$ by Lemma~\ref{lem:bounds}(iii). Thus, $\chi_2$ is strictly negative on $[0,2C]$, so $D'_2$ has no eigenvalues in $[0,2C]$. It follows that $D_2$ has no eigenvalues in $[-C,C]$.

\vskip5pt
(iii)
$\mathbf{n=4}$:
The characteristic polynomials of $\mathcal{A}'_4$ and $\mathcal{B}'_4$ coincide and are given by
\begin{equation*}
\begin{split}
\chi_4(x)\coloneqq x^5&-20(a^2+b^2+c^2)x^3-80abc\,x^2\\&+64(a^4+b^4+c^4+4a^2b^2+4b^2c^2+4c^2a^2)x+768abc(a^2+b^2+c^2).
\end{split}
\end{equation*}
We will show that this is strictly positive on $[0,2C]$, so $D_4$ has no eigenvalues in $[-C,C]$. Note that
$$\chi_4''(x)=20x^3-120(a^2+b^2+c^2)x-160abc
$$
is strictly convex on $[0,\infty)$ since its second derivative is $120x$. Moreover, $\chi_4''(0)=-160abc<0$
and
\begin{equation*}
\chi_4''(2C)=160 C^3-240(a^2+b^2+c^2)C-160abc<0
\end{equation*}
since $a^2+b^2+c^2>C^2$ by Lemma~\ref{lem:bounds}(iii).
It follows that $\chi_4''<0$ on $[0,2C]$, so $\chi_4$ is concave on this interval.
Moreover, $\chi_4(0)=768abc(a^2+b^2+c^2)>0$ and, again using $C^2<a^2+b^2+c^2$:
\begin{equation*}
\begin{split}
\chi_4(2C)&=32C^5-160(a^2+b^2+c^2)C^3-320abc\,C^2\\&\ \ +128(a^4+b^4+c^4+4a^2b^2+4b^2c^2+4c^2a^2)C+768abc(a^2+b^2+c^2)\\
&>32C^5-160(a^2+b^2+c^2)^2C+128(a^4+b^4+c^4+4a^2b^2+4b^2c^2+4c^2a^2)C.
\end{split}
\end{equation*}
Assume for a moment that $b \ge c$, and recall that $C^2>a^2+b^2$ by Lemma~\ref{lem:bounds}(i). Then
\begin{equation*}
\begin{split}
\chi_4(2C)&>32C((a^2+b^2)^2+(-5+4)(a^2+b^2+c^2)^2+8(a^2b^2+b^2c^2+c^2a^2))\\
&=32C(-c^4+6a^2c^2+6b^2c^2+8a^2b^2)>0
\end{split}
\end{equation*}
since $-c^4\ge -b^2c^2$. The case $b\le c$ is treated analogously. By concavity of $\chi_4$ on $[0,2C]$
and by $\chi_4(0)>0$ it follows that $\chi_4$ is strictly positive on $[0,2C]$, as claimed above.

\vskip5pt
(iv)
$\mathbf{n=1}$: We have $\mathcal{A}'_1=\left(\begin{smallmatrix} a&c+b \\c+b&a \end{smallmatrix}\right)$,
 $\mathcal{B}'_1=\left(\begin{smallmatrix} -a&c-b \\c-b&-a \end{smallmatrix}\right)$. Thus, $D_1=D'_1-C\Id$ has the eigenvalues
$$a+b+c-C,\ \ a-b-c-C,\ \ -a+b-c-C,\ \ -a-b+c-C.
$$
Note that the first of these equals~$\mu$. The second is $a-b-c-C<-a-b-c+C=-\mu<0$ by Lemma~\ref{lem:bounds}(i), (iii). In particular, $|a-b-c-C|>\mu$; this follows analogously for the third and fourth eigenvalue, too. Thus, $\mu$ has multiplicity~$1$ as an eigenvalue of~$D_1$. Recall that
$D_1$ contributes each of its eigenvalues twice to the spectrum of~$D$, resp.~$D^{\alpha_1}$ by Corollary~\ref{cor:speccollection}, resp.
Corollary~\ref{cor:speccollectionquot}.

\vskip5pt
(v)
$\mathbf{n=3}$:
It turns out that the eigenvalues of the $4\times 4$-matrices $\mathcal{A}'_3$ and $\mathcal{B}'_3$ can be computed explicitly and are given as follows, where $\fbox{1}$ -- $\fbox{4}$ belong to $\mathcal{A}'_3$ and $\fbox{5}$ -- $\fbox{8}$ to $\mathcal{B}'_3$:
\begin{equation*}
\begin{aligned}
&\fbox{1}\ a+b-c-2\sqrt{a^2+b^2+c^2-ab+bc+ca}\ &\fbox{5}\ -a-b-c-2\sqrt{a^2+b^2+c^2-ab-bc-ca}\\
&\fbox{2}\ a+b-c+2\sqrt{a^2+b^2+c^2-ab+bc+ca}\ &\fbox{6}\ -a-b-c+2\sqrt{a^2+b^2+c^2-ab-bc-ca}\\
&\fbox{3}\ a-b+c-2\sqrt{a^2+b^2+c^2+ab+bc-ca}\ &\fbox{7}\ -a+b+c-2\sqrt{a^2+b^2+c^2+ab-bc+ca}\\
&\fbox{4}\ a-b+c+2\sqrt{a^2+b^2+c^2+ab+bc-ca}\ &\fbox{8}\ -a+b+c+2\sqrt{a^2+b^2+c^2+ab-bc+ca}
\end{aligned}
\end{equation*}
We want to show that none of the above lies in $[C-\mu,C+\mu]=[2C-a-b-c,a+b+c]$; this will imply that $D_3$ has no
eigenvalues in $[-\mu,\mu]$. For $\fbox{1}$, note that
$$U\coloneqq a^2+b^2+c^2-ab+bc+ca=\tfrac12((a-b)^2+(b+c)^2+(c+a)^2)>\tfrac12(0+b^2+a^2)\ge\min\{a^2,b^2\}.
$$
Thus, $C-a-b+\sqrt{U}>C-a-b+\min\{a,b\}=C-\max\{a,b\}>0$ by Lemma~\ref{lem:bounds}(i). Hence, the expression in~$\fbox{1}$ is strictly smaller than $2C-a-b-c$.
Similarly, one shows the same for the expressions in \fbox{3} and \fbox{7}. Trivially, the expression in \fbox{5} is negative, hence strictly smaller than $C-\mu$.
For \fbox{2}, note that $U>\frac12(0+c^2+c^2)=c^2$, hence $a+b-c+2\sqrt{U}>a+b+c$. Analogously, the same holds for the expressions in
\fbox{4} and~\fbox{8}. Concerning~\fbox{6}, assume for a moment that $a\ge b\ge c$. Then $a^2+b^2+c^2-ab-bc-ca\le a^2+b^2+c^2-b^2-c^2-c^2
=a^2-c^2<a^2<C^2$ by Lemma~\ref{lem:bounds}(i). By symmetry in $a,b,c$, the same holds for permutations. In particular, the expression in~\fbox{6} is strictly smaller than $-a-b-c+2C$.
\end{remark}

We will make use of the classical Gershgorin Circle Theorem in its version for the rows of a matrix:

\begin{theorem}[Gershgorin Circle Theorem]
If $M=(m_{k,\ell})$ is a complex square matrix then each of its eigenvalues is contained in at least one of the closed discs
centered at $m_{k,k}$ with radius $\sum_{\ell\ne k}|m_{k,\ell}|$.
\end{theorem}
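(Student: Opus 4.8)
The plan is to give the classical one-line argument based on picking out the largest coordinate of an eigenvector. Let $\lambda\in\C$ be an eigenvalue of $M$ and let $v=(v_1,\ldots,v_n)^{\mathsf T}\ne0$ be a corresponding eigenvector, so $Mv=\lambda v$. First I would choose an index $k$ at which $|v_k|$ is maximal; since $v\ne0$ we have $|v_k|>0$.

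Next I would read off the $k$-th component of the equation $Mv=\lambda v$, namely $\sum_{\ell}m_{k,\ell}v_\ell=\lambda v_k$, and move the diagonal term to the other side to obtain $(\lambda-m_{k,k})v_k=\sum_{\ell\ne k}m_{k,\ell}v_\ell$. Taking absolute values and using the triangle inequality together with $|v_\ell|\le|v_k|$ for all $\ell$ gives
$$|\lambda-m_{k,k}|\,|v_k|\le\sum_{\ell\ne k}|m_{k,\ell}|\,|v_\ell|\le\Bigl(\sum_{\ell\ne k}|m_{k,\ell}|\Bigr)|v_k|.$$
Dividing by $|v_k|>0$ yields $|\lambda-m_{k,k}|\le\sum_{\ell\ne k}|m_{k,\ell}|$, which says precisely that $\lambda$ lies in the closed disc centered at $m_{k,k}$ of radius $\sum_{\ell\ne k}|m_{k,\ell}|$.

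This proof is completely elementary; there is essentially no obstacle. The only point requiring a moment's care is the very first step: one must note that because $v$ is a nonzero vector, the maximum of $|v_1|,\ldots,|v_n|$ is strictly positive, so that the final division by $|v_k|$ is legitimate. Everything else is the triangle inequality.
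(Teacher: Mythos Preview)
Your proof is correct and is exactly the standard argument for Gershgorin's theorem. The paper itself does not prove this statement at all; it merely quotes the classical Gershgorin Circle Theorem as a known tool and immediately proceeds to apply it to the squared matrices $\mathcal{A}_n^2$ and $\mathcal{B}_n^2$. So there is nothing to compare against: you have supplied the (entirely standard) proof that the paper omits.
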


We will apply this to the squares of the matrices
$$\mathcal{A}_n\coloneqq \mathcal{A}'_n-C\,I\text{ \ and \ }\mathcal{B}_n\coloneqq \mathcal{B}'_n-C\,I,
$$
where $I$ denotes the identity matrix in dimension $n+1$. Recall that $\mathcal{A}_n$ and $\mathcal{B}_n$ are the blocks
of the matrix representation of $D_n$ with respect to the basis $\{A_0,\ldots,A_n,B_0,\ldots,B_n\}$ of
$\Hom(V_n,\Sigma_3)$ from Definition~\ref{def:CAB}. Since $\mathcal{A}_n$ and $\mathcal{B}_n$ are tridiagonal, their squares
are pentadiagonal, and the following corollary is immediate.

\begin{corollary}
\label{cor:minmin}
Let the rows and columns of $\mathcal{A}_n$ and $\mathcal{B}_n$ be numbered by $0,\ldots,n$. Then for
each eigenvalue $\lambda$ of $D_n^2$ we have
$$\lambda\ge\min\bigl\{\min\{G_{\mathcal{A}}(n,k)\mid k=0,\ldots,n\},
\min\{G_{\mathcal{B}}(n,k)\mid k=0,\ldots,n\}\bigr\},
$$
where
$$\smash[t]{G_{\mathcal{A}}(n,k)\coloneqq \mathcal({A}_n^2)_{k,k}-\sum_{\ell=k-2}^{k+2}|(\mathcal{A}_n^2)_{k,\ell}|\text{ \ and \ }
G_{\mathcal{B}}(n,k)\coloneqq \mathcal({B}_n^2)_{k,k}-\sum_{\ell=k-2}^{k+2}|(\mathcal{B}_n^2)_{k,\ell}|}
$$
are the left endpoints of the $k$-th Gershgorin intervals corresponding to $\mathcal{A}_n^2$, resp.~$\mathcal{B}_n^2$.
(Entries $(\mathcal{A}_n^2)_{k,\ell}$, $(\mathcal{B}_n^2)_{k,\ell}$ involving $\ell<0$ or $\ell>n$ are to be interpreted as zero.)
\end{corollary}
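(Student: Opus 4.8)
The plan is to obtain the corollary directly from the Gershgorin Circle Theorem, applied separately to the two pentadiagonal matrices $\mathcal{A}_n^2$ and $\mathcal{B}_n^2$, once two routine facts are in place: the block structure of $D_n$ and the reality of its spectrum. First I would record the block structure. By Corollary~\ref{cor:dnexpl}, $D_n=D'_n-C\,\Id$ maps each of the subspaces $\spann\{A_0,\ldots,A_n\}$ and $\spann\{B_0,\ldots,B_n\}$ of $\Hom(V_n,\Sigma_3)$ into itself, so with respect to the basis $\{A_0,\ldots,A_n,B_0,\ldots,B_n\}$ it is represented by the block-diagonal matrix $\mathcal{A}_n\oplus\mathcal{B}_n$; consequently $D_n^2$ is represented by $\mathcal{A}_n^2\oplus\mathcal{B}_n^2$, so that $\spec(D_n^2)=\spec(\mathcal{A}_n^2)\cup\spec(\mathcal{B}_n^2)$. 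It therefore suffices to bound every eigenvalue of $\mathcal{A}_n^2$ from below by $\min\{G_{\mathcal{A}}(n,k)\mid k=0,\ldots,n\}$ and, symmetrically, every eigenvalue of $\mathcal{B}_n^2$ from below by $\min\{G_{\mathcal{B}}(n,k)\mid k=0,\ldots,n\}$.

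Next I would note that all of these eigenvalues are real. Since $D$ is self-adjoint on $L^2(S^3,\Sigma_3)$, its eigenvalues are real, and by Proposition~\ref{prop:diracvn} together with the block decomposition above, every eigenvalue of $\mathcal{A}_n$ or of $\mathcal{B}_n$ is an eigenvalue of $D_n$, hence of $D$, hence real; squaring, the eigenvalues of $\mathcal{A}_n^2$ and of $\mathcal{B}_n^2$ are real (in fact nonnegative). Now I would apply the Gershgorin Circle Theorem to $\mathcal{A}_n^2$, which has real entries by Corollary~\ref{cor:dnexpl} and, being the square of a tridiagonal matrix, is pentadiagonal: for any eigenvalue $\lambda$ of $\mathcal{A}_n^2$ there is a $k\in\{0,\ldots,n\}$ such that $\lambda$ lies in the closed disc centred at the real number $(\mathcal{A}_n^2)_{k,k}$ of radius $\sum_{\ell\ne k}|(\mathcal{A}_n^2)_{k,\ell}|$, a sum which by pentadiagonality only involves indices $\ell$ with $|\ell-k|\le 2$. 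Since $\lambda$ and the centre of this disc are real, $\lambda$ is at least the left endpoint $G_{\mathcal{A}}(n,k)$ of the disc, so
$$\lambda\;\ge\;G_{\mathcal{A}}(n,k)\;\ge\;\min\{G_{\mathcal{A}}(n,k)\mid k=0,\ldots,n\}.$$
The same argument applied to $\mathcal{B}_n^2$ yields $\lambda\ge\min\{G_{\mathcal{B}}(n,k)\mid k=0,\ldots,n\}$ for each eigenvalue of $\mathcal{B}_n^2$, and combining the two cases with $\spec(D_n^2)=\spec(\mathcal{A}_n^2)\cup\spec(\mathcal{B}_n^2)$ gives the corollary.

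I do not expect a genuine obstacle: the statement is a bookkeeping consequence of the Gershgorin theorem. The only points deserving a line of care are that the estimate is one-sided, and so relies on the eigenvalues being real -- which is exactly where the self-adjointness of $D$ enters -- and the matching of the Gershgorin data to the definitions of $G_{\mathcal{A}}(n,k)$ and $G_{\mathcal{B}}(n,k)$: here one uses the pentadiagonality of $\mathcal{A}_n^2$ and $\mathcal{B}_n^2$ to see that the Gershgorin radius for row $k$ involves only the off-diagonal entries in positions $|\ell-k|\le 2$ appearing in those definitions, so that $G_{\mathcal{A}}(n,k)$ and $G_{\mathcal{B}}(n,k)$ are the left endpoints of the respective Gershgorin intervals. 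En route one may also observe, from the explicit entries in Corollary~\ref{cor:dnexpl}, that the diagonal entries of $\mathcal{A}_n^2$ and $\mathcal{B}_n^2$ are nonnegative: each product of two mirror-image off-diagonal entries equals $(c-b)^2$ or $(c+b)^2$ times a product of positive integers, and the remaining diagonal contribution is a square.
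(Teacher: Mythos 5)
Your argument is correct and is exactly the paper's route: the paper treats this corollary as an immediate consequence of the Gershgorin Circle Theorem applied to the pentadiagonal matrices $\mathcal{A}_n^2$ and $\mathcal{B}_n^2$, using the block decomposition of $D_n$ from Corollary~\ref{cor:dnexpl} and the reality of the eigenvalues coming from self-adjointness of $D$. The extra care you take in spelling out why the discs may be replaced by intervals (real eigenvalues, real centres) is precisely what makes the paper's ``immediate'' legitimate.
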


Using Corollary~\ref{cor:dnexpl} one directly computes the following formulas for the entries of the rows of $\mathcal{A}_n^2$
and $\mathcal{B}_n^2$:

\begin{proposition}
\label{prop:sqentries}
If $k\in\{0,\ldots,n\}$ is even then
\begin{align*}
(\mathcal{A}_n^2)_{k,k-2}&=(c-b)(c+b)k(k-1)\\
(\mathcal{A}_n^2)_{k,k-1}&=-2(c-b)(C+a)k\\
(\mathcal{A}_n^2)_{k,k}&=(c-b)^2k(n-k+1)+(a(n-2k)-C)^2+(c+b)^2(n-k)(k+1)\\
(\mathcal{A}_n^2)_{k,k+1}&=-2(c+b)(C-a)(n-k)\\
(\mathcal{A}_n^2)_{k,k+2}&=(c+b)(c-b)(n-k)(n-k-1)
\end{align*}
If $k\in\{0,\ldots,n\}$ is odd, then the same formulas hold with $a,b$ replaced by $-a,-b$, respectively.
The entries of $\mathcal{B}_n^2$ are obtained by swapping ``$k$ even'' and ``$k$ odd''.
\end{proposition}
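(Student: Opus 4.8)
The plan is to prove Proposition~\ref{prop:sqentries} by a direct computation starting from the tridiagonal description of $D_n$ in Corollary~\ref{cor:dnexpl}. Using $D_n=D'_n-C\,\Id$, write $D_nA_j=p_jA_{j-1}+d_jA_j+q_jA_{j+1}$, so that (in the convention $D_nA_j=\sum_i(\mathcal{A}_n)_{i,j}A_i$) the matrix $\mathcal{A}_n$ has entries $(\mathcal{A}_n)_{j-1,j}=p_j$, $(\mathcal{A}_n)_{j,j}=d_j$, $(\mathcal{A}_n)_{j+1,j}=q_j$ and all others zero, where $p_j=(c-b)(n-j+1)$, $d_j=a(n-2j)-C$, $q_j=(c+b)(j+1)$ when $j$ is even, and $p_j=(c+b)(n-j+1)$, $d_j=-a(n-2j)-C$, $q_j=(c-b)(j+1)$ when $j$ is odd. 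Analogously $D_nB_j=\tilde p_jB_{j-1}+\tilde d_jB_j+\tilde q_jB_{j+1}$, and by Corollary~\ref{cor:dnexpl} the coefficients $\tilde p_j,\tilde d_j,\tilde q_j$ are obtained from $p_j,d_j,q_j$ by interchanging the roles of ``$j$ even'' and ``$j$ odd''.

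First I would compute the five nonzero entries in row~$k$ of $\mathcal{A}_n^2$ via $(\mathcal{A}_n^2)_{k,\ell}=\sum_m(\mathcal{A}_n)_{k,m}(\mathcal{A}_n)_{m,\ell}$. Tridiagonality leaves very few surviving terms: $(\mathcal{A}_n^2)_{k,k-2}=q_{k-1}q_{k-2}$, $(\mathcal{A}_n^2)_{k,k-1}=q_{k-1}(d_{k-1}+d_k)$, $(\mathcal{A}_n^2)_{k,k}=q_{k-1}p_k+d_k^2+p_{k+1}q_k$, $(\mathcal{A}_n^2)_{k,k+1}=p_{k+1}(d_k+d_{k+1})$, and $(\mathcal{A}_n^2)_{k,k+2}=p_{k+1}p_{k+2}$. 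One then substitutes the parity-dependent values above. The point requiring attention is that $k\pm1$ have the parity opposite to $k$, while $k\pm2$ share $k$'s parity, so in each product one must pick the correct case of the definitions; in particular, for $k$ even one finds $d_{k-1}+d_k=-2(a+C)$ and $d_k+d_{k+1}=-2(C-a)$, which is the step where the $a(n-2j)$-contributions telescope. Collecting the remaining factors then reproduces exactly the stated formulas for $k$ even (for instance $q_{k-1}q_{k-2}=(c-b)k\cdot(c+b)(k-1)$, and similarly for the others).

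For $k$ odd I would not redo the computation but instead observe that flipping the parity of $k$ flips the parity of every index $k,k\pm1,k\pm2$ that occurs: this replaces each $d_j$ by $-a(n-2j)-C$ and interchanges the factors $c-b$ and $c+b$ throughout, which is precisely the effect of the substitution $a\mapsto-a$, $b\mapsto-b$ on the $k$-even formulas. Likewise, the entries of $\mathcal{B}_n^2$ are obtained from those of $\mathcal{A}_n^2$ by swapping ``$k$ even'' and ``$k$ odd'', since by Corollary~\ref{cor:dnexpl} this already holds at the level of $\mathcal{A}_n$ versus $\mathcal{B}_n$. The only genuine obstacle here is the careful bookkeeping of these parities and of the signs in the sums $d_{k\pm1}+d_k$; once that is handled consistently, the proposition follows by elementary algebra.
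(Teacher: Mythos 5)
Your computation is correct and follows the same route as the paper, which simply performs the direct calculation from Corollary~\ref{cor:dnexpl}: the tridiagonal coefficients $p_j,d_j,q_j$, the five surviving products in each row of $\mathcal{A}_n^2$, the telescoping sums $d_{k-1}+d_k=-2(C+a)$ and $d_k+d_{k+1}=-2(C-a)$ for $k$ even, and the parity-flip substitution $a\mapsto-a$, $b\mapsto-b$ (under which $C$ is invariant) all check out, as does the even/odd swap for $\mathcal{B}_n$. No gaps; the formulas also automatically handle the boundary indices since the spurious terms carry vanishing factors $k$ or $n-k$, as the paper notes.
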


In the above proposition, note that entries which have to be interpreted as zero
(like, e.g., $(\mathcal{A}_n^2)_{0,-2}$, $(\mathcal{A}_n^2)_{n,n+1}$) are described by formulas which indeed happen to be zero.

\begin{corollary}
\label{cor:G01}
If we assume
$$b\ge c
$$
then
\begin{equation*}
G_\mathcal{A}(n,k)=\begin{cases}G(n,k),&k\text{ even},\\
\tilde G(n,k),&k\text{ odd,}\end{cases}
\text{\ \  \ and\ \ \ }
G_\mathcal{B}(n,k)=\begin{cases}\tilde G(n,k),&k\text{ even},\\
G(n,k),&k\text{ odd,}\end{cases}
\end{equation*}
where
\begin{align*}
G(n,k)&\coloneqq (a(n-2k)-C)^2+(b-c)^2k(n-k+1)+(b+c)^2(n-k)(k+1)\\
&\ \ -2(b-c)(C+a)k-2(b+c)(C-a)(n-k)\\
&\ \ -(b^2-c^2)(k(k-1)+(n-k)(n-k-1))
\end{align*}
and
\begin{align*}
\tilde G(n,k)&\coloneqq (a(n-2k)+C)^2+(b+c)^2k(n-k+1)+(b-c)^2(n-k)(k+1)\\
&\ \ -2(b+c)(C-a)k-2(b-c)(C+a)(n-k)\\
&\ \ -(b^2-c^2)(k(k-1)+(n-k)(n-k-1)).
\end{align*}
Moreover, for all $n\in\N_0$ and $k\in\{0,\ldots,n\}$ we have
$$G(n,k)=\tilde G(n,n-k).
$$
\end{corollary}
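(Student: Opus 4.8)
The statement is purely computational, and the plan is to deduce all three parts directly from the explicit formulas in Proposition~\ref{prop:sqentries} together with the elementary bounds of Lemma~\ref{lem:bounds}. By the definition of $G_\mathcal{A}(n,k)$ and $G_\mathcal{B}(n,k)$ in Corollary~\ref{cor:minmin}, what is needed is the diagonal entry of the $k$-th row of $\mathcal{A}_n^2$ (resp.~$\mathcal{B}_n^2$) minus the sum of the absolute values of the (at most four) off-diagonal entries of that row; since all five entries are listed in Proposition~\ref{prop:sqentries}, the only real task is to fix their signs under the standing assumption $b\ge c$.

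First I would take $k$ even and rewrite the off-diagonal entries of the $k$-th row of $\mathcal{A}_n^2$ from Proposition~\ref{prop:sqentries} as $-(b^2-c^2)k(k-1)$, $\;2(b-c)(C+a)k$, $\;-2(b+c)(C-a)(n-k)$ and $\;-(b^2-c^2)(n-k)(n-k-1)$, with the (manifestly nonnegative) diagonal entry $(b-c)^2k(n-k+1)+(a(n-2k)-C)^2+(b+c)^2(n-k)(k+1)$. Now $b\ge c>0$ gives $b^2-c^2\ge0$ and $b\pm c\ge0$, the number $C+a$ is positive trivially, and $C-a>0$ by Lemma~\ref{lem:bounds}(i); hence the first, third and fourth off-diagonal entries are $\le0$ and the second is $\ge0$, so taking absolute values and subtracting yields exactly the displayed expression for $G(n,k)$. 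For $k$ odd one invokes the observation in Proposition~\ref{prop:sqentries} that the same formulas hold after $a,b\mapsto-a,-b$: the roles of $b+c$ and $b-c$ on the sub- and super-diagonals, and those of $C-a$ and $C+a$, are exchanged, but the sign pattern stays uniform (again thanks to $C-a>0$), and one reads off $\tilde G(n,k)$. Since the entries of $\mathcal{B}_n^2$ arise from those of $\mathcal{A}_n^2$ by swapping ``$k$ even'' and ``$k$ odd'', the formula for $G_\mathcal{B}(n,k)$ follows with $G$ and $\tilde G$ interchanged.

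For the last identity $G(n,k)=\tilde G(n,n-k)$ I would simply substitute $n-k$ for $k$ in the formula for $\tilde G$: the argument $n-2k$ changes sign, so its square is unchanged; the coefficients $k(n-k+1)$ and $(n-k)(k+1)$ accompanying $(b+c)^2$ and $(b-c)^2$ get interchanged; the factors $k$ and $n-k$ multiplying $-2(b+c)(C-a)$ and $-2(b-c)(C+a)$ get swapped; and $k(k-1)+(n-k)(n-k-1)$ is symmetric under $k\leftrightarrow n-k$. Comparison with the formula for $G(n,k)$ then finishes the proof. I expect no real obstacle beyond careful sign bookkeeping; the one substantive ingredient besides Proposition~\ref{prop:sqentries} is the inequality $C>\max\{a,b,c\}$ from Lemma~\ref{lem:bounds}(i), which is precisely what makes the sign of every off-diagonal entry the same across all rows and so lets the absolute values be removed without case distinctions.
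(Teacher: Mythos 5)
Your proposal is correct and follows essentially the same route as the paper: one determines the signs of the pentadiagonal entries from Proposition~\ref{prop:sqentries} using $b\ge c$ and $C>a$ (Lemma~\ref{lem:bounds}(i)), reads off the Gershgorin left endpoints to get $G$ and $\tilde G$ (with $\mathcal{B}_n^2$ handled by the even/odd swap), and checks $G(n,k)=\tilde G(n,n-k)$ by direct substitution. The only detail the paper makes explicit that you use implicitly is that the formulas for the nonexistent boundary entries such as $(\mathcal{A}_n^2)_{0,-2}$ automatically evaluate to zero, so no case distinctions at the ends of the rows are needed.
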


\begin{proof}
Using the assumption $b\ge c$ and recalling that $C>a$ by Lemma~\ref{lem:bounds}(i), we have the signs $(-,+,*,-,-)$ in the $k$-th row of $\mathcal{A}_n^2$ from Proposition~\ref{prop:sqentries} 
if $k$ is even and $(-,-,*,+,-)$ if $k$ is odd. This immediately implies the formulas for $G_\mathcal{A}(n,k)$; recall the observation that the terms corresponding to nonexistent entries
like $(\mathcal{A}_n^2)_{0,-2}$ etc.~indeed evaluate to zero here. Again, $G_\mathcal{B}$ is obtained from $G_\mathcal{A}$ by swapping
``$k$ even'' and ``$k$ odd''.
The last statement of the corollary is obvious.
\end{proof}

\begin{remark}
\label{rem:gershgoals}
Recall from Remark~\ref{rem:smalln} that we already proved the statements of Theorem~\ref{thm:firstexplicit} for $0\le n\le4$. In order to prove Theorem~\ref{thm:firstexplicit}, it will suffice to prove the following:

If $\scal>0$ and $a\ge b\ge c$ then
\begin{align}
G(n,k)&> C^2 \textit{ for all even $n\ge6$ and all }k\in\{0,\ldots,n\},\label{eq:neven}\\
G(n,k)&>\mu^2 \textit{ for all odd $n\ge5$ and all }k\in\{0,\ldots,n\}.\label{eq:nodd}
\end{align}
In fact, in view of Corollaries~\ref{cor:minmin} and~\ref{cor:G01}, this will imply the statements of Theorem~\ref{thm:firstexplicit} for
all $n\ge 5$ in case $a\ge b\ge c$. The condition $a\ge b\ge c$ can then be removed because of Remark~\ref{rem:permut}.
\end{remark}

The key observation for proving \eqref{eq:neven} and \eqref{eq:nodd} by induction is the next lemma. It says that the values
in each column of the following two triangular diagrams are strictly increasing if $\scal>0$:

\begin{table}[h]
\centering
\begin{tabular}{ccccccccccc}
&&&&&G(0,0)&&&&&\\
&&&&G(2,0)&G(2,1)&G(2,2)&&&&\\
&&&G(4,0)&G(4,1)&G(4,2)&G(4,3)&G(4,4)&&&\\
&&G(6,0)&G(6,1)&G(6,2)&G(6,3)&G(6,4)&G(6,5)&G(6,6)&&\\
&G(8,0)&G(8,1)&G(8,2)&G(8,3)&G(8,4)&G(8,5)&G(8,6)&G(8,7)&G(8,8)&\\
$\mbox{\reflectbox{$\ddots$}}$&\vdots&\vdots&\vdots&\vdots&\vdots&\vdots&\vdots&\vdots&\vdots&$\ddots$
\end{tabular}
\end{table}
\begin{table}[h]
\centering
\begin{tabular}{cccccccccc}
&&&&G(1,0)&G(1,1)&&&&\\
&&&G(3,0)&G(3,1)&G(3,2)&G(3,3)&&&\\
&&G(5,0)&G(5,1)&G(5,2)&G(5,3)&G(5,4)&G(5,5)&&\\
&G(7,0)&G(7,1)&G(7,2)&G(7,3)&G(7,4)&G(7,5)&G(7,6)&G(7,7)&\\
$\mbox{\reflectbox{$\ddots$}}$&\vdots&\vdots&\vdots&\vdots&\vdots&\vdots&\vdots&\vdots&$\ddots$
\end{tabular}
\end{table}

\begin{lemma}[Triangle Induction]
\label{lem:triangle}
If $\scal>0$ then, for each $n\in\N_0$ and $k\in\{0,\ldots,n\}$:
$$G(n+2,k+1)> G(n,k).
$$
\end{lemma}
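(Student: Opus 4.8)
The plan is to compute the difference $G(n+2,k+1)-G(n,k)$ directly from the explicit formula in Corollary~\ref{cor:G01} and to read off its positivity from Lemma~\ref{lem:bounds}(v). The first step will be a change of variables: setting $p\coloneqq k$ and $q\coloneqq n-k$, the expression $G(n,k)$ becomes a function $F(p,q)$ of the two nonnegative integers $p,q$, and passing from $(n,k)$ to $(n+2,k+1)$ is precisely passing from $(p,q)$ to $(p+1,q+1)$. The key gain from this substitution is that $n-2k=q-p$ is invariant under it, so the term $(a(n-2k)-C)^2$ disappears from the difference, and what remains consists only of the terms $(b-c)^2p(q+1)$, $(b+c)^2q(p+1)$, $-2(b-c)(C+a)p$, $-2(b+c)(C-a)q$ and $-(b^2-c^2)\bigl(p(p-1)+q(q-1)\bigr)$, each a polynomial of degree at most two in $p,q$.

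Next I would compute the difference term by term. Each of the three quadratic terms contributes something linear in $p+q$ plus a constant, while each of the two linear terms contributes a constant. Collecting everything and using $(b-c)^2+(b+c)^2=2b^2+2c^2$ together with $(b-c)(C+a)+(b+c)(C-a)=2(bC-ac)$, one finds that the $b^2$-part of the coefficient of $p+q$ cancels, leaving
$$G(n+2,k+1)-G(n,k)=4c^2n+4\bigl(b^2+c^2+ac-bC\bigr).$$

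It then remains to check that the right-hand side is positive when $\scal>0$. The summand $4c^2n$ is nonnegative, and for the second summand Lemma~\ref{lem:bounds}(v) gives $C<b+\tfrac{ca}{b}$, hence $bC<b^2+ca$, so that $b^2+c^2+ac-bC>c^2>0$. This yields $G(n+2,k+1)>G(n,k)$, as claimed. I expect the only place requiring care to be the term-by-term bookkeeping in the middle step — in particular verifying that the coefficient of $p+q$ collapses from $2b^2+2c^2-2(b^2-c^2)$ to $4c^2$ and that the $C$-linear contributions add up to exactly $-4bC$ rather than something messier; beyond that there is no real obstacle. It is worth noting that no ordering assumption on $a,b,c$ is needed here, consistent with the statement of the lemma, since Lemma~\ref{lem:bounds}(v) is symmetric in $a,b,c$.
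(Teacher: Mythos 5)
Your computation is correct and reproduces exactly the paper's own proof: the difference $G(n+2,k+1)-G(n,k)$ equals $4\bigl(c^2n-bC+ac+b^2+c^2\bigr)$, which is positive under $\scal>0$ by Lemma~\ref{lem:bounds}(v). The change of variables $(p,q)=(k,n-k)$ is just a tidy way of organizing the same direct computation the paper performs, so there is nothing further to add.
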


\begin{proof}
One directly computes
\begin{align*}
G(n+2,k+1)-G(n,k)&=4(c^2 n- bC +ac+ b^2+ c^2)>4(-bC+ac+b^2).
\end{align*}
If $\scal>0$ then this is positive by Lemma~\ref{lem:bounds}(v).
\end{proof}

\begin{remark}
\label{rem:suffbase}
In view of Lemma~\ref{lem:triangle} and the above diagrams, \eqref{eq:neven} and \eqref{eq:nodd} will follow from Proposition~\ref{prop:base} below; recall that $C\ge\mu>0$ by Lemma~\ref{lem:bounds}(ii), (iii).
\end{remark}

\begin{proposition}[Base cases for Triangle Induction]\
\label{prop:base}
If $\scal>0$ and $a\ge b\ge c$ then
\begin{itemize}
\item[(i)]
$G(0,0)=C^2$, and $G(n,n)>C^2$ for all $n\ge 1$,
\item[(ii)]
$G(n,0)>C^2$ for all $n\ge 6$,
\item[(iii)]
$G(n,1)>C^2$ for all $n\ge 4$,
\item[(iv)]
$G(1,0)=\mu^2$ and $G(5,0)>\mu^2$.
\end{itemize}
\end{proposition}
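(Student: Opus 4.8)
The plan is to plug the relevant values of $k$ (namely $k=0$, $k=1$, $k=n$) and, for part~(iv), of $n$, into the explicit formula for $G(n,k)$ in Corollary~\ref{cor:G01}, and in each case to rewrite $G(n,k)-C^2$ (resp.\ $G(n,k)-\mu^2$) in a form whose sign is visible, invoking the reformulations of $\scal>0$ from Lemma~\ref{lem:bounds}.

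Part~(i) and the first half of~(iv) are quick. Setting $n=k=0$ kills every summand in $G(0,0)$ except $(a(n-2k)-C)^2=C^2$. Setting $k=n$ kills the summands carrying a factor $(b+c)^2$ or $(b+c)(C-a)$, and one computes $G(n,n)-C^2=n\bigl((a^2-b^2+c^2)n+2aC+2b(b-c)-2(b-c)(C+a)\bigr)$; the coefficient of $n$ is $\ge c^2>0$ since $a\ge b$, and — again using $a\ge b$ together with $C>b$ from Lemma~\ref{lem:bounds}(i) — the constant term equals $2a(C-b+c)+2(b-c)(b-C)\ge 2cC>0$, so $G(n,n)>C^2$ for $n\ge1$ (this needs only $a\ge b\ge c$). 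Plugging $n=1$, $k=0$ gives $G(1,0)=(C-a)^2-2(b+c)(C-a)+(b+c)^2=(a+b+c-C)^2=\mu^2$.

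For~(ii): putting $k=0$ one finds $G(n,0)-C^2=n\bigl((a^2-b^2+c^2)n+K\bigr)$ with $K=2ab+2ac+2b^2+2bc-2C(a+b+c)$. Since $a^2-b^2+c^2>0$ (by $a\ge b$), the bracket is strictly increasing in $n$, so the assertion for all $n\ge6$ is \emph{equivalent} to the single inequality $G(6,0)>C^2$; and comparing the above with $\mu^2-C^2=(a+b+c)^2-2C(a+b+c)$ shows that the second claim of~(iv), $G(5,0)>\mu^2$, reduces to this very same inequality. To prove $G(6,0)>C^2$, rewrite
\begin{equation*}
G(6,0)-C^2 = 6\bigl(5(a^2+b^2+c^2)-10b^2+\mu^2-C^2\bigr).
\end{equation*}
By Lemma~\ref{lem:bounds}(iii) we have $a^2+b^2+c^2>C^2$, so the right-hand side exceeds $6\bigl(4(a^2+b^2+c^2)-10b^2+\mu^2\bigr)\ge 6\bigl(-2b^2+4c^2+\mu^2\bigr)$, using $a\ge b$ in the last step. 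Finally $4c^2>b^2$ because $\scal>0$ forces $c(a+b)>ab$ (Lemma~\ref{lem:bounds}(iv)), whence $c>\tfrac{ab}{a+b}\ge\tfrac b2$; and $\mu^2>b^2$ because $\scal>0$ gives $C<a+\tfrac{bc}a\le a+c$ (Lemma~\ref{lem:bounds}(v)), i.e.\ $\mu=a+b+c-C>b$. Hence $-2b^2+4c^2+\mu^2>0$, proving $G(6,0)>C^2$ and thus also (iv).

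For~(iii): putting $k=1$ one gets $G(n,1)-C^2=\alpha n^2+\beta n+\gamma$ with $\alpha=a^2-b^2+c^2>0$ and $\gamma=4(a-b)(a+b)+4(a+c)(C-b)>0$ (again $C>b$). Thus $n\mapsto G(n,1)-C^2$ is an upward parabola, positive at $n=0$, and it is positive for all $n\ge4$ provided its vertex lies at $\le4$ — equivalently $8\alpha+\beta\ge0$ — and its value at $n=4$ is positive, i.e.\ $G(4,1)>C^2$. Both are, after clearing the denominator $abc$ (using $2C=\frac{a^2b^2+b^2c^2+c^2a^2}{abc}$), homogeneous polynomial inequalities in $a,b,c$ of degree $\le5$, and the plan is to verify them on the region $\{a\ge b\ge c>0,\ ab<c(a+b)\}$ — recall that by Lemma~\ref{lem:bounds}(iv) this single constraint $ab<c(a+b)$ is, under $a\ge b\ge c$, equivalent to $\scal>0$ — say by exhibiting each left-hand side as a nonnegative combination of $c(a+b)-ab$, of squares, and of monomials, or by normalizing $c=1$ via homogeneity and checking a two-variable inequality. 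I expect $G(4,1)>C^2$ to be the main obstacle: it degenerates to an equality in the limit where $b$ and $c$ stay comparable while $a/b\to\infty$ (with $c/b\to1$ being then forced by $\scal>0$), so the bound is sharp and a crude estimate will not do; a dedicated algebraic identity, of the kind used above for $G(6,0)-C^2$, will be needed there.
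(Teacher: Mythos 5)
Parts (i), (ii) and (iv) of your proposal are correct, and in places slightly slicker than the paper's own argument: your lower bound for the $n$-coefficient of $G(n,n)$ needs only $C>b$ (Lemma~\ref{lem:bounds}(i)) rather than the $\scal>0$ input the paper uses, and your observation that $G(6,0)-C^2$ and $G(5,0)-\mu^2$ are both positive multiples of the same quadratic expression (indeed $G(5,0)-\mu^2=\tfrac23\bigl(G(6,0)-C^2\bigr)$) lets one inequality settle both (ii) (via monotonicity of the bracket in $n$) and the second half of (iv); the paper instead locates the second root $\nu<6$ of $G(n,0)=C^2$ and estimates $G(5,0)-\mu^2$ separately, though with the same ingredients from Lemma~\ref{lem:bounds} ($C^2<a^2+b^2+c^2$, $c>\tfrac{ab}{a+b}\ge\tfrac b2$, $C<a+c$). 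Your identity $G(6,0)-C^2=6\bigl(5(a^2+b^2+c^2)-10b^2+\mu^2-C^2\bigr)$ and the ensuing chain of estimates check out.

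Part (iii), however, is a genuine gap: you reduce the claim to the two inequalities ``vertex at $n\le 4$'' (i.e.\ $8\alpha+\beta\ge0$) and $G(4,1)>C^2$, but you do not prove either of them --- you only announce a ``plan'' (nonnegative combinations of $c(a+b)-ab$ and squares, or normalizing $c=1$), and you explicitly defer the decisive step to ``a dedicated algebraic identity''. That is precisely the content of statement (iii), so nothing is established. Moreover, your heuristic justification for why this step should be delicate is wrong: $G(4,1)>C^2$ does \emph{not} degenerate in the limit $a/b\to\infty$ with $c/b\to1$; for instance along $(a,b,c)=(t,1,1)$, which satisfies $\scal>0$ for every $t$, one computes $G(4,1)-C^2=22-\tfrac6t\to22$. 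For comparison, the paper avoids any vertex analysis and proves (iii) for all $n\ge4$ in one sweep: starting from the explicit expansion of $G(n,1)$, it replaces $aC<a^2+bc$, $bC<b^2+ca$, $cC\le ca+c^2$ (Lemma~\ref{lem:bounds}(v) together with $a\ge b$), uses $n^2-6n+8\ge0$ for $n\ge4$, $a\ge b\ge c$ and $2c>b$ (Lemma~\ref{lem:bounds}(iv)), and arrives at $G(n,1)>C^2+(2n-4)(a-b)(b-c)+\bigl((n-3)^2+3\bigr)c^2>C^2$. Some argument of this kind is what your proposal still owes for (iii).
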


\begin{proof}

(i)
By definition we have
\begin{align*}
G(n,n)&=(a(n-2n)-C)^2+(b-c)^2 n-2(b-c)(C+a)n -(b^2-c^2)n(n-1)\\
&=(a^2-b^2+c^2)n^2+2(aC+b^2-bc-bC+cC-ab+ca)n+C^2.
\end{align*}
The coefficient at $n^2$ is positive since $a\ge b$. Recall that $-bC+b^2+ca>0$ by Lemma~\ref{lem:bounds}(v),
so the coeffient at~$n$ is greater than $2(a+c)(C-b)$, which is positive by Lemma~\ref{lem:bounds}(i).
The statement now follows.

\vskip5pt
(ii)
By definition,
\begin{equation}
\label{eq:gn0}
\begin{aligned}
G(n,0)&=(an-C)^2+(b+c)^2 n -2(b+c)(C-a)n-(b^2-c^2)n(n-1)\\
&=(a^2-b^2+c^2)n^2+2(-aC+b^2+bc-bC-cC+ab+ca)n+C^2.
\end{aligned}
\end{equation}
Again, the coefficient at $n^2$ is positive. If we view $n\mapsto G(n,0)$ as a quadratic function on~$\R$, we see that
$$n=\nu\coloneqq \frac{2(aC+bC+cC-b^2-ab-bc-ca)}{a^2-b^2+c^2}
$$
is the second solution (apart from $n=0$) of $G(n,0)=C^2$. Thus, it suffices to show that $\nu<6$. Equivalently, we have to
show that
$$f\coloneqq aC+bC+cC-b^2-ab-bc-ca-3(a^2-b^2+c^2)<0.
$$
Using Lemma~\ref{lem:bounds}(v) and $a\ge b$, we have
\begin{equation}
\label{eq:Ckey}
aC< a^2+bc, \ \ \ bC<b^2+ca,  \ \ \ cC<\tfrac ca (a^2+bc)\le ca+c^2.
\end{equation}
Thus,
\begin{align*}
f&<a^2+bc+b^2+ca+ca+c^2-b^2-ab-bc-ca-3(a^2-b^2+c^2)\\
&=-2a^2+3b^2+ca-2c^2-ab.
\end{align*}
We now use Lemma~\ref{lem:bounds}(iv) and $a\ge b\ge c$ to see that
\begin{equation}
\label{eq:ckey}
2c>\tfrac{2ab}{a+b}\ge b
\end{equation}
and therefore $f<b^2+ca-bc-ab=(b-c)(b-a)\le0$.

\vskip5pt
(iii)
By definition,
\begin{align*}
G(n,1)&=(a(n-2)-C)^2+(b-c)^2n+2(b+c)^2(n-1)\\
&\ \ \ \ -2(b-c)(C+a)-2(b+c)(C-a)(n-1)-(b^2-c^2)(n-1)(n-2)\\
&=C^2-\bigl((2n-4)a+2nb+(2n-4)c\bigr)C\\
&\ \ \ \ +(n-2)^2 a^2 -(n^2-6n+4)b^2+n^2 c^2
 +(2n-4)ab +(2n-4)bc  +2n\,ca.
\end{align*}
Using~\eqref{eq:Ckey} again, and keeping in mind that $n\ge4$ (in particular, $2n-4>0$), we obtain:
\begin{align*}
G(n,1)&>C^2-\bigl((2n-4)(a^2+bc)+2n(b^2+ca)+(2n-4)(ca+c^2)\bigr)\\
&\ \ \ \ +(n-2)^2 a^2 -(n^2-6n+4)b^2+n^2 c^2 +(2n-4)ab +(2n-4)bc +2n\,ca\\
&=C^2+(n^2-6n+8)a^2-(n^2-4n+4)b^2+(n^2-2n+4)c^2+(2n-4)ab-(2n-4)ca.
\end{align*}
Note that $n^2-6n+8\ge0$ since $n\ge4$. Therefore, using $a\ge b\ge c$ and~\eqref{eq:ckey} we conclude:
\begin{align*}
G(n,1)&>C^2+(n^2-6n+8)b^2-(n^2-4n+4)b^2\\
&\ \ \ \ +(n^2-2n+4-2(2n-4))c^2+(2n-4)bc+(2n-4)ab-(2n-4)ca\\
&=C^2-(2n-4)b^2+(n^2-6n+12)c^2+(2n-4)(bc+ab-ca)\\
&=C^2+(2n-4)(a-b)(b-c)+((n-3)^2+3)c^2>C^2.
\end{align*}

(iv)
Substituting $n=1$ in~\eqref{eq:gn0}, the statement $G(1,0)=(a+b+c-C)^2=\mu^2$ is immediate. For the second statement, we
let $n=5$ in~\eqref{eq:gn0} and use
\eqref{eq:Ckey} and~\eqref{eq:ckey} once more to obtain:
\begin{align*}
G(5,0)-\mu^2&=25(a^2-b^2+c^2)+10(-aC-bC-cC+b^2+ab+bc+ca)+C^2-\mu^2\\
&=-8aC-8bC-8cC+24a^2-16b^2+24c^2+8ab+8bc+8ac\\
&>8(-a^2-bc-b^2-ca-ca-c^2+3a^2-2b^2+3c^2+ab+bc+ca)\\
&=8(2a^2-3b^2+2c^2+ab-ca)> 8(-b^2+bc+ab-ca)=8(a-b)(b-c)\ge0.
\end{align*}
\end{proof}

\begin{remark}

(i)
The proof of Theorem~\ref{thm:firstexplicit} -- and, thus, Theorem~\ref{thm:first} -- is now complete; recall Remarks \ref{rem:suffbase} and \ref{rem:gershgoals}.

\vskip5pt
(ii)
Concerning Proposition~\ref{prop:base}, we add the observation that its statements are already optimal in the sense
that
\begin{itemize}
\item[$\bullet$] $G(2,0)$ and $G(4,0)$ are~\emph{not} always greater than or equal to~$C^2$\\
(and, less interestingly, neither is $G(5,0)$), and
\item[$\bullet$] $G(3,0)$ is \emph{not} always greater than or equal to~$\mu^2$.
\end{itemize}
In fact, consider the point $(a,b,c)=(1,1,\frac12)$. This corresponds to a metric $\gabc$ with $\scal=0$ and, thus,
does not satisfy our assumption $\scal>0$. However, in any of its neighborhoods there exist triples corresponding to metrics which do satisfy $\scal>0$; for example, $(1,1,\frac12+\eps)$ for small $\eps>0$.
Moreover, the formulas for $C$, $\mu$, and $G(n,k)$ are continuous in $(a,b,c)$. Thus, the above negative assertions follow from noting that
in $(1,1,\frac12)$ one has
$$C^2=\tfrac94,\ \ \mu^2=1,\ \ G(2,0)=\tfrac14,\ \ G(3,0)=0,\ \ G(4,0)=\tfrac14,\ \ G(5,0)=1.
$$
This behavior of $G(2,0)$, $G(3,0)$, $G(4,0)$ just says that it would not have been possible to prove the results for $n=2,3,4$ in
Remark~\ref{rem:smalln} by using the Gershgorin Circle Theorem.
Also, it would not be possible to show, using the Gershgorin Circle Theorem, that $\scal>0$ implies that the eigenvalues of $D_5^2$ are (even) greater than $C^2$ -- we do not know whether this is actually the case.

\vskip5pt
(iii) Finally, we remark that the eigenvalues of $D_3^2$ are smaller than $C^2$ for some metrics of positive scalar curvature. In fact, for small
$\eps>0$, the point $(a,b,c)=(1,\eps,\eps)$ corresponds to a metric with $\scal>0$. The eigenvalue of $D_3'$ listed under $\fbox6$ in Remark~\ref{rem:smalln}(v) has value $1-4\eps$ at that point, and $C=\frac12(2+\eps^2)$. The corresponding eigenvalue of $D_3=D_3'-C\Id$ is $1-4\eps-\frac12(2+\eps^2)=-4\eps-\frac12\eps^2$, so its square is smaller than $C^2$ if $\eps>0$ is sufficiently small.
\end{remark}

\Section{Spectral determination of isometry class}
\label{sec:invar}

\noindent
The aim of this section is to prove Theorem~\ref{thm:isometry}.

\begin{remark}
\label{rem:heatinvars}
Consider the asymptotic expansion
$$\Tr(e^{-tD^2})\sim_{t\searrow0}(4\pi t)^{-m/2}(a_0+a_1t+a_2t^2+\ldots)
$$
of the trace of the heat kernel associated with the square of the Dirac operator $D$
over a closed $m$-dimensional Riemannian spin manifold $(M,g)$.
The coefficients $a_j$ are determined by the eigenvalue spectrum of $D^2$ and, hence, by the eigenvalue spectrum of~$D$.

As proved in~\cite{DF},
\begin{equation}
\label{eq:heatinvars}
\begin{aligned}
a_0&=\dimm(\Sigma_m)\vol(M,g),\\
a_1&=-\frac{\dimm(\Sigma_m)}{12}\int_M\scal\,\dvol_g,\\
a_2&=\frac{\dimm(\Sigma_m)}{1440}\int_M(5\scal^2-8\|\Ric\|^2-7\|R\|^2)\dvol_g,
\end{aligned}
\end{equation}
where $\scal$, $\Ric$, $R$ are the  scalar curvature, the Ricci tensor, and the Riemannian curvature tensor of~$(M,g)$, respectively,
and $\Sigma_m$
is the space of complex $m$-spinors. (Alternatively, one can derive these equations as a special case of Gilkey's more general formulas for Laplace type operators $P$ on complex vector bundles in~\cite{Gil}, \S 4.8. Here, $P=D^2=\nabla^*\nabla+\frac14\scal$ by the Lichnerowicz formula, where $\nabla$ is the spinor connection on the spinor bundle $\Sigma M$.)

We apply this to our setting where $(M,g)$ is one of our manifolds $(S^3,\gabc)$ (with Dirac operator $D$ as in the
previous sections) or $(\SO(3),\gabc)$ (with Dirac operator either $D^{\alpha_0}$ or $D^{\alpha_1}$.
Since $\gabc$ is homogeneous, all integrands in~\eqref{eq:heatinvars} are constant. In particular, in each of the above cases we have:
\begin{itemize}
\item[$\bullet$] The volume is determined by $a_0$,
\item[$\bullet$] the scalar curvature is determined by $a_0$ and $a_1$,
\item[$\bullet$] the value of $8\|\Ric\|^2+7\|R\|^2$ is determined by $a_0$, $a_1$, and $a_2$.
\end{itemize}
\end{remark}

\begin{remark}
By the previous remark, the spectrum of any of our Dirac operators over $(S^3,\gabc)$ or $(\SO(3),\gabc)$ determines, in particular, whether $\scal>0$ or $\scal\le0$.
Our strategy for proving Theorem~\ref{thm:isometry} now is to show the following for each of the Dirac operators under consideration:
\begin{itemize}
\item[$\bullet$]
If $\scal>0$ then the volume and the scalar curvature, together with the eigenvalue of smallest absolute value, determine $a,b,c$ up to permutation.
\item[$\bullet$]
If $\scal\le0$ then the volume and the scalar curvature, together with $8\|\Ric\|^2+7\|\Riem\|^2$, determine $a,b,c$ up to permutation.
\end{itemize}
In view of Remark~\ref{rem:heatinvars} and Remark~\ref{rem:permut}, Theorem~\ref{thm:isometry} will then indeed be proven.
\end{remark}

\subsection{Case of positive scalar curvature}

\noindent
If $\scal>0$ then $\mu=a+b+c-C$ is determined by the spectrum of~$D$ over $(S^3,\gabc)$, and also by the spectrum
of~$D^{\alpha_1}$ over $(\SO(3),\gabc)$, since $\mu$ has smallest absolute value among all eigenvalues of~$D$ or $D^{\alpha_1}$
by Theorem~\ref{thm:first}. Note that
\begin{equation}
\label{eq:vol}
\vol(S^3,\gabc)=\frac{\vol(S^3,g_{111})}{abc}=\frac{2\pi^2}{abc}\text{\ \ and\ \ }\vol(\SO(3),\gabc)=\frac{\pi^2}{abc}\,.
\end{equation}
Recall that the scalar curvature
of $(S^3,\gabc)$ or $(\SO(3),\gabc)$ is constant and is given by the formula from Proposition~\ref{prop:scal}.

Showing that the volume together with $\scal$ and~$\mu$ determines $a,b,c$ up to permutation is equivalent to showing
that each of the symmetric elementary polynomials
$$ 
s_1\coloneqq a+b+c,\ \ \ s_2\coloneqq ab+bc+ca,\ \ \ s_3\coloneqq abc
$$
is individually determined by these data.
Clearly, \eqref{eq:vol} implies that $s_3$ is determined by the volume of $(S^3,\gabc)$, resp.~$(\SO(3),\gabc)$.
Moreover,
\begin{equation}
\begin{aligned}
\label{eq:musymm}
\mu&=a+b+c-\tfrac12\left(\tfrac{ab}c+\tfrac{bc}a+\tfrac{ca}b\right)
   =s_1-\tfrac12(a^2b^2+b^2c^2+c^2a^2)/s_3
	 =s_1-\tfrac12(s_2^2-2s_1s_3)/s_3\\
	 &=2s_1-\tfrac12 s_2^2/s_3
\end{aligned}
\end{equation}
and, by Proposition~\ref{prop:scal},
\begin{equation}
\begin{aligned}
\scal&=4(a^2+b^2+b^2)-2\left(\tfrac{a^2b^2}{c^2}+\tfrac{b^2c^2}{a^2}+\tfrac{c^2a^2}{b^2}\right)
  =4(s_1^2-2s_2)-2(a^4b^4+b^4c^4+c^4a^4)/s_3^2\\
	&=4(s_1^2-2s_2)-2\bigl((s_2^2-2s_1s_3)^2-2s_3^2(s_1^2-2s_2)\bigr)/s_3^2
	=-16s_2-2s_2^4/s_3^2+8s_1s_2^2/s_3\\
	&=-16s_2+4\mu\,s_2^2/s_3.
\end{aligned}
\end{equation}
We read the resulting formula
$$
4\mu\,s_2^2/s_3-16s_2-\scal=0
$$
as a quadratic equation for $s_2$. Because of $\mu>0$, $s_3>0$, $\scal>0$ there exists precisely one \emph{positive} solution $s_2$ (and precisely one negative solution, which is irrelevant since $s_2>0$). Thus, $s_2$ is determined by $s_3$, $\mu$, and $\scal$.
Now \eqref{eq:musymm} implies that this is the case for $s_1$, too.

For $D^{\alpha_0}$ over $(\SO(3),\gabc)$ we argue analogously. Instead of~$\mu$, we use
$C=\frac12\left(\frac{ab}c+\frac{bc}a+\frac{ca}b\right)$, which is the smallest absolute value
of eigenvalues of $D^{\alpha_0}$ over $\SO(3)$ by Theorem~\ref{thm:first}.
Here, it will be more convenient
to use the elementary symmetric polynomials in $a^2,b^2,c^2$:
$$\sigma_1\coloneqq a^2+b^2+c^2,\ \ \ \sigma_2\coloneqq a^2b^2+b^2c^2+c^2a^2,\ \ \ \sigma_3\coloneqq a^2b^2c^2.
$$
These determine $a^2,b^2,c^2$ and, thus, also their positive roots $a,b,c$, up to permutation.
Clearly, $\sigma_3$ is determined by $\vol(\SO(3),\gabc)=\pi^2/\sqrt{\sigma_3}$.
Next, $\sigma_2$ is determined by $C$ and $\sigma_3$ since
\begin{equation}
\label{eq:Csym}
C=\tfrac12\sigma_2/\sqrt{\sigma_3}.
\end{equation}
Finally, 
\begin{equation}
\label{eq:scalsym2}
\scal=8(\sigma_1-C^2)
\end{equation}
by Proposition~\ref{prop:scal}; in particular, $\sigma_1$ is determined by $C$ and $\scal$. This concludes the proof of Theorem~\ref{thm:isometry} in the case of positive scalar curvature.

\subsection{Case of nonpositive scalar curvature}

\noindent
Here we will be able to argue simultaneously for $D$, $D^{\alpha_0}$, and $D^{\alpha_1}$.
Instead of the smallest absolute value of eigenvalues, which is not at our disposal if $\scal\le0$, we now use
$$\tilde a_2\coloneqq 8\|\Ric\|^2+7\|\Riem\|^2
$$
which is spectrally determined by Remark~\ref{rem:heatinvars}.
Since we are in dimension three, $\Riem$ is a linear combination of $\Ric\KN g$ and $g\KN g$ (where $\KN$ denotes the Kulkarni-Nomizu product), which implies that $\Riem(X,Y)Z=0$ if $\{X,Y,Z\}$ is an orthogonal basis consisting of Ricci eigenvectors. Since the latter is the case for our Milnor basis $\{X_1,X_2,X_3\}$ from Definition~\ref{def:gabc}, we have, writing $K_{ij}\coloneqq K(\spann\{X_i,X_j\})$ for the sectional curvatures:
$$\|\Riem\|^2=4(K_{12}^2+K_{23}^2+K_{31}^2)
$$
(the factor 4 arising from $R_{ijji}^2+R_{ijij}^2+R_{jiji}^2+R_{jiij}^2=4K_{ij}^2$)
and, of course,
$$\|\Ric\|^2=(K_{12}+K_{23})^2+(K_{23}+K_{31})^2+(K_{31}+K_{12})^2.
$$
By the formulas on p.~319/320 in~\cite{Mi} and recalling that, in our notation, Milnor's $\lambda_1,\lambda_2,\lambda_3$ are
$\frac{2bc}a$, $\frac{2ca}b$, and $\frac{2ab}c$, respectively, one has
$$K_{12}=2(a^2+b^2-c^2)+\frac{b^2c^2}{a^2}+\frac{c^2a^2}{b^2}-3\frac{a^2b^2}{c^2}
$$
and the analogous formulas for $K_{23}$ and $K_{31}$. By direct computation, using the elementary symmetric polynomials
in $a^2,b^2,c^2$ again, this leads to
\begin{align*}
\|\Ric\|^2&=64\sigma_1^2-64\sigma_1\sigma_2^2/\sigma_3+12\sigma_2^4/\sigma_3^2+64\sigma_2\text{ \ \ \ \ and}\\
\|\Riem\|^2&=192\sigma_1^2-224\sigma_1\sigma_2^2/\sigma_3+44\sigma_2^4/\sigma_3^2+256\sigma_2,
\end{align*}
hence 
\begin{equation}
\label{eq:ricandr}
\tilde a_2=1856\sigma_1^2-2080\sigma_1\sigma_2^2/\sigma_3+404\sigma_2^4/\sigma_3^2+2304\sigma_2.
\end{equation}
Moreover, recall from~\eqref{eq:scalsym2} and \eqref{eq:Csym} that
\begin{equation}
\label{eq:scal}
\scal=8\sigma_1-2\sigma_2^2/\sigma_3.
\end{equation}
As before, $\sigma_3$ is determined by the volume of the underlying Riemannian manifold. It now suffices to show that
$\sigma_2$ is determined by $\scal$ and $\tilde a_2$ because, by~\eqref{eq:scal}, $\sigma_1$ will then be determined by these and the volume.

Using \eqref{eq:ricandr}, \eqref{eq:scal}, and the corresponding equation for $\scal^2$, we obtain that
\begin{align*}
Y\coloneqq (\tilde a_2-101\scal^2)/576
&= -8\sigma_1^2+2\sigma_1\sigma_2^2/\sigma_3+4\sigma_2\\
&= -\scal\cdot\sigma_1+4\sigma_2
\end{align*}
is determined by $\scal$ and $\tilde a_2$.
In case $\scal=0$, it follows immediately that $\sigma_2$, too, is determined by these data, and we are done.
In case $\scal\ne0$, we have $\sigma_1=(4\sigma_2-Y)/\scal$. Substituting this for~$\sigma_1$ in~\eqref{eq:scal}, we obtain
\begin{equation}
\label{eq:last}
\scal=8\dfrac{4\sigma_2-Y}{\scal}-2\dfrac{\sigma_2^2}{\sigma_3}
\end{equation}
We read this as a quadratic equation for~$\sigma_2$ with coefficients determined by the volume together with $\scal$ and $\tilde a_2$. Under the assumption $\scal<0$, the right hand side of~\eqref{eq:last} has strictly negative derivative in each $\sigma_2>0$, so there cannot be more than one positive solution~$\sigma_2$. (Note that this argument would not have worked if $\scal>0$.)
This concludes the proof of Theorem~\ref{thm:isometry} in the case of nonpositive scalar curvature.

\section*{Acknowledgments}

The second author thanks Emilio Lauret for raising her attention to the topic, and for several interesting discussions.

\end{document}